\newtheorem{theorem}{Theorem}[section]
\newtheorem{proposition}[theorem]{Proposition}
\newtheorem{lemma}[theorem]{Lemma}
\newtheorem{corollary}[theorem]{Corollary}
\newtheorem{remark}[theorem]{Remark}
\numberwithin{equation}{section}
\begin{document}

\title[Yang--Mills connections on compact complex tori]{Yang--Mills
connections on compact complex tori}

\author{Indranil Biswas}

\address{School of Mathematics, Tata Institute of Fundamental
Research, Homi Bhabha Road, Bombay 400005, India}

\email{indranil@math.tifr.res.in}

\subjclass[2000]{53C07, 32L05, 14L10}

\keywords{Higgs $G$-bundle, Yang-Mills connection, Einstein-Hermitian connection,
polystability, complex torus}

\date{}

\begin{abstract}
Let $G$ be a connected reductive complex affine algebraic group and $K\,\subset\, G$ a
maximal compact subgroup. Let $M$ be a compact complex torus equipped with a flat K\"ahler
structure and $(E_G\, ,\theta)$ a polystable Higgs $G$--bundle on $M$. Take any $C^\infty$
reduction of structure group $E_K\, \subset\, E_G$ to the subgroup $K$ that solves the
Yang--Mills equation for $(E_G\, ,\theta)$. We prove that the principal $G$--bundle $E_G$
is polystable and the above reduction $E_K$ solves the Einstein--Hermitian equation for
$E_G$. We also prove that for a semistable (respectively, polystable) Higgs $G$--bundle
$(E_G \, , \theta)$ on a compact connected Calabi--Yau manifold, the underlying
principal $G$-- bundle $E_G$ is semistable (respectively, polystable).

\end{abstract}

\maketitle

\section{Introduction}\label{sec1}

Let $X$ be a compact connected K\"ahler manifold equipped with a K\"ahler form
$\widetilde\omega$. Let $(E\, ,\theta)$ be a Higgs vector bundle on $X$. Given a
Hermitian structure $h$ on $E$, the curvature of the corresponding Chern connection
on $E$ will be denoted by ${\mathcal K}_h$. A Hermitian structure $h$ is said to satisfy
the Yang--Mills equation for $(E\, ,\theta)$ if there is $c\, \in\, \mathbb R$ such that
$$
\Lambda_{\widetilde\omega}({\mathcal K}_h +\theta\wedge\theta^*)\,=\, c
\sqrt{-1}\cdot\text{Id}_E\, ,
$$
where $\Lambda_{\widetilde\omega}$ is the adjoint of multiplication of forms by
$\widetilde\omega$, and $\theta^*$ is the adjoint of $\theta$ with respect to $h$.
A Higgs bundle admits a Hermitian structure satisfying the Yang--Mills equation if and only
if it is polystable \cite{Si1}, \cite{Hi}.
If $\theta\,=\, 0$, then the above Yang--Mills equation is also
known as the Einstein--Hermitian equation. A holomorphic vector bundle $E$ admits an
Einstein--Hermitian metric if and only if $E$ is polystable \cite{UY}, \cite{Do}.

More generally, let $G$ be a connected reductive affine algebraic group over $\mathbb C$.
Fix a maximal compact subgroup $K\, \subset\, G$. The center of the Lie algebra of $K$ will
be denoted by ${\mathfrak z}({\mathfrak k})$. Let $(E_G\, ,\theta)$ be a Higgs $G$--bundle
on $X$ (its definition is recalled in Section \ref{se4.1}). A $C^\infty$ reduction of
structure group of $E_G$ to $K$
$$
E_K\, \subset\, E_G
$$
is said to satisfy the Yang--Mills equation for $(E\, ,\theta)$ if there is an element
$c\, \in\, {\mathfrak z}({\mathfrak k})$ such that
$$
\Lambda_{\widetilde\omega}({\mathcal K} +\theta\wedge\theta^*)\,=\, c\, ,
$$
where ${\mathcal K}$ is the curvature of the Chern connection associated to
the reduction $E_K$ and $\theta^*$ is the adjoint of $\theta$
constructed using $E_K$ (see \cite[p. 191--192, Proposition
5]{At} for Chern connections on principal bundles). A Higgs $G$--bundle admits a
Yang--Mills connection if and only if it is polystable \cite{Si2}, \cite{BS}.
As mentioned before, if $\theta\,=\, 0$, then the Yang--Mills equation is also called
the Einstein--Hermitian equation.

We consider Higgs $G$--bundles over a torus $M$ equipped with a flat K\"ahler form
$\widetilde\omega$. If $(E_G\, ,\theta)$ is a polystable Higgs $G$--bundle on $M$, we
prove that the principal $G$--bundle $E_G$ is polystable. If a reduction to $K$
$$
E_K\, \subset\, E_G
$$
satisfies the Yang--Mills equation for $(E\, ,\theta)$, we show that
$E_K$ also satisfies the Einstein--Hermitian equation for $E_G$.

In the last section we observe some properties of Higgs bundles on K\"ahler manifolds
with nonnegative tangent bundle.

\section{Higgs vector bundles on a torus}

\subsection{Semistable and polystable Higgs bundles}

Let $M$ be a compact complex torus of complex dimension $d$. Fix a K\"ahler
class
$$
\omega\,\in\, H^{1,1}(M)\cap H^2(M,\, {\mathbb R})\, .
$$
The degree of any torsionfree coherent analytic sheaf $F$ on $M$ is defined
to be
$$
\text{degree}(F)\, :=\, (c_1(F)\cap \omega^{d-1})\cap [M]\, \in\, {\mathbb R}\, .
$$
If $F$ is of positive rank, then
$$
\mu(F)\, :=\, \frac{{\rm degree}(F)}{{\rm rank}(F)}\, \in\, {\mathbb R}
$$
is called the \textit{slope} of $F$.

Let $E$ be a holomorphic vector bundle on $M$. A {\it Higgs field} on $E$ is a
holomorphic section
$$
\theta\, \in\, H^0(M,\, End(E)\otimes\Omega^1_M)
$$
such that section
$$
\theta\wedge\theta\, \in\, H^0(M,\, End(E)\otimes\Omega^2_M)
$$
vanishes identically. A \textit{Higgs bundle} is a holomorphic vector bundle equipped
with a Higgs field.

The following lemma is well-known (see \cite{BF}, \cite{FGN}).

\begin{lemma}\label{lem1}
Let $(E\, ,\theta)$ be a semistable Higgs bundle on $M$. Then the holomorphic
vector bundle $E$ is semistable.
\end{lemma}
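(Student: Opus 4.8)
The plan is to prove that a semistable Higgs bundle $(E,\theta)$ on a complex torus $M$ has semistable underlying bundle $E$. The key special feature of a complex torus is that its tangent bundle is trivial, so $\Omega^1_M \cong {\mathcal O}_M^{\oplus d}$ is a trivial vector bundle. This means a Higgs field $\theta$ is simply a collection of $d$ commuting endomorphism-valued global sections, and more importantly, any subsheaf destabilizing $E$ as an ordinary bundle lives in the same category where $\theta$-invariance can be controlled.

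Let me reconsider the approach. Suppose, for contradiction, that $E$ is not semistable. Then $E$ has a unique maximal destabilizing subsheaf $F \subset E$, the first term of its Harada--Narasimhan filtration, satisfying $\mu(F) > \mu(E)$. The standard strategy is to show that this canonical subsheaf $F$ must be preserved by the Higgs field $\theta$, i.e. that $\theta(F) \subset F \otimes \Omega^1_M$. If I can establish this invariance, then $(F,\theta|_F)$ is a Higgs subsheaf of $(E,\theta)$ with $\mu(F) > \mu(E)$, contradicting the assumed semistability of the Higgs bundle $(E,\theta)$.

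To prove the invariance, I would argue as follows. Since $\Omega^1_M$ is trivial of rank $d$, write $\theta = \sum_{i=1}^d \theta_i \otimes dz_i$ where each $\theta_i \in H^0(M, End(E))$ is a global endomorphism and the $dz_i$ form a basis of the translation-invariant $1$-forms. It suffices to show that each $\theta_i$ preserves $F$. Consider the composite homomorphism $F \hookrightarrow E \stackrel{\theta_i}{\longrightarrow} E \twoheadrightarrow E/F$. By the maximality and uniqueness property of the Harder--Narasimhan filtration (specifically, that there are no nonzero homomorphisms from a semistable sheaf of slope $\mu_1$ to one of strictly smaller maximal slope), one shows this composite vanishes: the maximal destabilizing subsheaf of $E$ has slope strictly greater than every Harder--Narasimhan slope of $E/F$, and $\theta_i$ is a global endomorphism of $E$, hence slope-preserving in the appropriate sense. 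Therefore $\theta_i(F) \subset F$ for every $i$, which gives $\theta(F) \subset F \otimes \Omega^1_M$ as required.

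The main obstacle I anticipate is making the vanishing of $F \to E/F$ rigorous, since $F$ and $E/F$ are only coherent sheaves rather than subbundles, and one must account for the torsion and singularities of these sheaves. I would handle this by passing to the associated graded of the Harder--Narasimhan filtration and invoking the fact that a homomorphism between semistable sheaves that strictly decreases slope must be zero, applied to the graded pieces of $\theta_i$; alternatively, one can restrict attention to the locus where the sheaves are locally free (whose complement has codimension at least two) and extend across it using Hartogs-type arguments, since degree computations are insensitive to codimension-two modifications. This is where most of the technical care is required, but the essential geometric input remains the triviality of $\Omega^1_M$ on the torus.
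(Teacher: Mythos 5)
Your proposal is correct and takes essentially the same route as the paper: both exploit the triviality of $\Omega^1_M$ to write $\theta$ as a tuple of global endomorphisms of $E$, and both show the maximal destabilizing subsheaf $E_1$ is $\theta$-invariant because every homomorphism from $E_1$ to $E/E_1$ vanishes on slope grounds, contradicting Higgs semistability. The technical worries you raise about torsion are unnecessary, since the Harder--Narasimhan terms are saturated and the vanishing of homomorphisms into a quotient of strictly smaller maximal slope is standard for torsion-free sheaves.
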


\begin{proof}
Assume that $E$ is not semistable. Let
\begin{equation}\label{ee1}
E_1\,\subset\, E_2\,\subset\, \cdots \,\subset\, E_n\,=\, E
\end{equation}
be the Harder--Narasimhan filtration of $E$. We have
$$
H^0(M,\, End(E_1))\, =\, H^0(M,\, Hom(E_1\, ,E))
$$
because $H^0(M,\, Hom(E_1\, ,E_i/E_{i-1}))\,=\, 0$ for every
$i\, \in\, \{2\, ,\cdots\, , n\}$. Since $\Omega^1_M$ is the trivial vector
bundle of rank $d$, this implies that
$$
\theta(E_1)\, \subset\, E_1\otimes \Omega^1_M\, .
$$
Therefore, $E_1$ contradicts the given condition that $(E\, ,\theta)$ is
semistable. Hence the vector bundle $E$ is semistable.
\end{proof}

We note that the following lemma is a consequence of Corollary 2.2 of \cite[p. 73]{Bi}
(see also \cite{FGN}).

\begin{lemma}\label{lem2}
Let $(E\, ,\theta)$ be a polystable Higgs bundle on $M$. Then the holomorphic
vector bundle $E$ is polystable. 
\end{lemma}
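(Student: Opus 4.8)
The plan is to reduce the statement to the case of a stable Higgs bundle and then exploit the triviality of the cotangent bundle $\Omega^1_M$, exactly as in the proof of Lemma \ref{lem1}. Since $(E\, ,\theta)$ is polystable, it splits as a direct sum $(E\, ,\theta)\,=\,\bigoplus_i (V_i\, ,\theta_i)$ of stable Higgs bundles $(V_i\, ,\theta_i)$, all of slope $\mu(E)$. A finite direct sum of polystable vector bundles of the same slope is again polystable, so it suffices to prove that each $V_i$ is a polystable vector bundle. Hence I may assume that $(E\, ,\theta)$ itself is a stable Higgs bundle. By Lemma \ref{lem1}, the underlying vector bundle $E$ is then semistable.

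Next I would consider the socle $S\, \subset\, E$, namely the maximal polystable subsheaf of $E$ of slope $\mu(E)$; it is nonzero because $E$ is a nonzero semistable sheaf, and it is canonically associated to $E$. The key point is that $\theta$ preserves $S$. Since $M$ is a torus, $\Omega^1_M$ is holomorphically trivial of rank $d$, so writing $\theta\,=\,\sum_{j=1}^d \theta_j\, dz_j$ in terms of translation-invariant holomorphic $1$--forms $dz_1\, ,\ldots\, ,dz_d$ expresses $\theta$ through the global endomorphisms $\theta_j\, \in\, H^0(M,\, End(E))$. Any such endomorphism maps $S$ into itself: the image $\theta_j(S)$ is a quotient of the polystable sheaf $S$, hence has slope $\ge\mu(E)$, while it is also a subsheaf of the semistable sheaf $E$, hence has slope $\le\mu(E)$; therefore $\theta_j(S)$, being a slope-$\mu(E)$ quotient of the polystable sheaf $S$, is itself polystable of slope $\mu(E)$, and by maximality of the socle it lies in $S$. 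This is precisely the content of Corollary 2.2 of \cite{Bi}. Consequently $\theta(S)\, \subset\, S\otimes\Omega^1_M$, so $(S\, ,\theta|_S)$ is a Higgs subsheaf of $(E\, ,\theta)$ with $\mu(S)\,=\,\mu(E)$.

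Finally, since $(E\, ,\theta)$ is stable, every proper nonzero $\theta$--invariant subsheaf has slope strictly less than $\mu(E)$. As $S$ is nonzero and $\mu(S)\,=\,\mu(E)$, this forces $S\,=\,E$, and hence $E\,=\,S$ is polystable. Summing over the factors $V_i$ then yields the assertion for the original $(E\, ,\theta)$. I expect the step showing that the Higgs field preserves the socle to be the main obstacle; everything hinges on the triviality of $\Omega^1_M$, which turns $\theta$ into a family of genuine holomorphic endomorphisms of $E$, together with the endomorphism-invariance of the socle of a semistable sheaf recorded in \cite{Bi}.
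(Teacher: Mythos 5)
Your proposal is correct and uses the same key mechanism as the paper: the triviality of $\Omega^1_M$ turns $\theta$ into global endomorphisms $\theta_j$ of $E$, and these preserve the socle $V$ because each $\theta_j(V)$ is a polystable subsheaf of slope $\mu(E)$. The only (cosmetic) difference is at the end: you first split $(E,\theta)$ into stable Higgs summands and conclude $S=E$ from Higgs stability, whereas the paper keeps the polystable Higgs bundle intact and derives a contradiction by producing a Higgs-invariant complement $V'$ of $V$ and a polystable subsheaf $V''\subset V'$ that must lie in $V$.
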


\begin{proof}
{}From Lemma \ref{lem1} we know that $E$ is semistable. Let
$$
V\,\subset\, E
$$
be the coherent analytic subsheaf generated by all polystable subsheaves of $E$
with slope $\mu(E)$. This $V$ is a polystable subsheaf with slope $\mu(E)$
\cite[page 23, Lemma 1.5.5]{HL}. We will show that
\begin{equation}\label{s1}
\theta(V)\, \subset\, V\otimes\Omega^1_M\, .
\end{equation}

To show \eqref{s1}, fix a holomorphic trivialization of $\Omega^1_M$. Using this
trivialization, the homomorphism $\theta$ is written as
$$
\theta\,=\, (\theta_1\, ,\cdots\, ,\theta_d)\, ,
$$
where $\theta_i\, \in\, H^0(M,\, End(E))$ for every $i$. Since $E$ is semistable, and $V$
is polystable with $\mu(V)\,=\, \mu(E)$, it follows that $\theta_i(V)$ is polystable with
$\mu(\theta_i(V))\,=\, \mu(E)$. Therefore, \eqref{s1} holds.

Assume that $E$ is not polystable. So $V\,\not=\, E$.
Since the Higgs bundle $(E\, ,\theta)$ is polystable, from \eqref{s1} and the fact
that $\mu(V)\,=\, \mu(E)$ we conclude that there is a coherent analytic subsheaf
$$
V'\, \subset\, E
$$
such that $\mu(V')\,=\, \mu(E)$ and $V\cap V'\,=\, 0$. Let $V''\,\subset\, V'$ be
a polystable subsheaf such that $\mu(V'')\,=\,\mu(V')$. From the definition of $V$
it follows that $V''\,\subset\, V$. But this contradicts the condition that
$V\cap V''\, \subset\, V\cap V'\,=\, 0$. So $E$ is polystable.
\end{proof}

\subsection{Higgs fields on a polystable vector bundle}

Let $E\,\longrightarrow\, M$ be a polystable vector bundle. Our aim in this subsection
is to describe all Higgs fields $\theta$ on $E$ such that the Higgs
bundle $(E\, ,\theta)$ is polystable.

Since $E$ is polystable, we can write
\begin{equation}\label{e1}
E\,=\, \bigoplus_{j=1}^\ell E_j\otimes {\mathbb C}^{n_j}\, ,
\end{equation}
where
\begin{itemize}
\item each $E_j$ is a stable vector bundle with $\mu(E_j)\,=\, \mu(E)$,

\item $E_j$ is not isomorphic to $E_{j'}$ if $j\,\not=\, j'$, and

\item $n_j\, >\, 0$ for every $j$.
\end{itemize}
{}From the first two conditions if follows immediately that
$H^0(M,\, Hom(E_j\, ,E_{j'}))\,=\, 0$ if $j\,\not=\, j'$. Therefore, we have
\begin{equation}\label{e2}
H^0(M,\, Hom(E_j\, ,E_{j'})\otimes\Omega^1_M)\,=\, 0~\ \text{ if }~ j\,\not=\, j'\, .
\end{equation}
Since $E_j$ is stable, we also have
\begin{equation}\label{e3}
H^0(M,\, End(E_j))\,=\, \mathbb C\, .
\end{equation}
In view of \eqref{e2} and \eqref{e3}, any $\beta\,\in\,
H^0(M,\, End(E))$ can be written as
$$
\beta\,=\, \bigoplus_{j=1}^\ell \text{Id}_{E_j}\otimes T_j
$$
in terms of the isomorphism in \eqref{e1}, where
$$
T_j\, \in\, M(n_j, {\mathbb C})\,=\, {\rm End}_{\mathbb C}({\mathbb C}^{n_j})\, .
$$

As before, fix a holomorphic trivialization of $\Omega^1_M$. Using this trivialization,
any $\theta\,\in\, H^0(M,\, End(E)\otimes\Omega^1_M)$ can be written as
$$
\theta\,=\, (\theta_1\, ,\cdots\, ,\theta_d)\, ,
$$
where $\theta_i\,\in\, H^0(M,\, End(E))$.

Take any
\begin{equation}\label{e4}
\theta\,\in\, H^0(M,\, End(E)\otimes\Omega^1_M)\, .
\end{equation}
Write
$$
\theta\,=\, (\theta_1\, ,\cdots\, ,\theta_d)\, ,
$$
as above. Let
\begin{equation}\label{e5}
\theta_i\, =\, \bigoplus_{j=1}^\ell \text{Id}_{E_j}\otimes T^i_j\, ,
\end{equation}
where $T^i_j\, \in\, M(n_j, {\mathbb C})$.

\begin{proposition}\label{prop1}
The pair $(E\, ,\theta)$ in \eqref{e4} is a polystable Higgs bundle if and only if 
\begin{enumerate}
\item $T^i_jT^k_j\,=\, T^k_jT^i_j$ (see \eqref{e5}) for all $i\, ,k\,\in\, \{1\, ,\cdots\,
,d\}$ and all $j$, and

\item each $T^i_j$ is semisimple.
\end{enumerate}
\end{proposition}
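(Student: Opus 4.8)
The plan is to handle the two implications separately, after first translating the Higgs condition $\theta\wedge\theta\,=\,0$ into the matrices $T^i_j$. Writing $\theta\,=\,\sum_i\theta_i\,dz_i$ in the global frame $dz_1,\ldots,dz_d$ trivializing $\Omega^1_M$, one has $\theta\wedge\theta\,=\,\sum_{i<k}[\theta_i,\theta_k]\,dz_i\wedge dz_k$, and in view of \eqref{e5} the commutator decomposes as $[\theta_i,\theta_k]\,=\,\bigoplus_j \text{Id}_{E_j}\otimes[T^i_j,T^k_j]$. Hence $\theta$ is a Higgs field if and only if condition (1) holds; this settles (1) in both directions, and from now on I assume (1).

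For the implication $(2)\Rightarrow$ polystable, I would use condition (1) to simultaneously diagonalize, for each fixed $j$, the commuting semisimple operators $T^1_j,\ldots,T^d_j$, writing $\mathbb C^{n_j}\,=\,\bigoplus_\alpha L_{j\alpha}$ as a direct sum of common eigenlines. This produces a decomposition $E\,=\,\bigoplus_{j,\alpha}E_j\otimes L_{j\alpha}$ into $\theta$-invariant subbundles on each of which $\theta$ acts by scalars, so each summand is isomorphic as a Higgs bundle to $E_j$ equipped with a Higgs field proportional to the identity. Since $E_j$ is stable of slope $\mu(E)$, every proper subsheaf of $E_j$ has strictly smaller slope, so a fortiori every proper $\theta$-invariant subsheaf does; thus each summand is a stable Higgs bundle of slope $\mu(E)$ and $(E\, ,\theta)$ is polystable.

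The substantive direction is polystable $\Rightarrow(2)$, and the main obstacle is to classify the saturated $\theta$-invariant subsheaves of slope $\mu(E)$. The key step is a structural observation: any saturated subsheaf $F\,\subseteq\, E$ with $\mu(F)\,=\,\mu(E)$ is itself semistable of slope $\mu(E)$ (every subsheaf of $F$ also lies in $E$, hence has slope $\le\mu(E)$), so it is a subobject of $E$ in the abelian category of semistable sheaves of slope $\mu(E)$, whose simple objects are the stable ones. Because $E\,=\,\bigoplus_j E_j\otimes\mathbb C^{n_j}$ is a semisimple object of this category, and $\text{Hom}(E_j,E)\,=\,\mathbb C^{n_j}$ while $\text{Hom}(E_{j'},E_j)\,=\,0$ for $j\,\ne\, j'$, the subobject $F$ is semisimple and splits along its isotypic components, forcing $F\,=\,\bigoplus_j E_j\otimes W_j$ for subspaces $W_j\,\subseteq\,\mathbb C^{n_j}$. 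Such an $F$ is $\theta$-invariant precisely when $T^i_j(W_j)\,\subseteq\, W_j$ for all $i$ and $j$.

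Finally I would invoke the elementary linear-algebra fact that, for the commuting family $\{T^i_j\}_i$, every simultaneously invariant subspace of $\mathbb C^{n_j}$ admits a simultaneously invariant complement if and only if each $T^i_j$ is semisimple: if some $T^{i_0}_j$ has nonzero nilpotent part, then inside a generalized eigenspace its common kernel is an invariant subspace with no invariant complement. Via the correspondence of the previous paragraph, such a subspace yields a $\theta$-invariant subbundle $E_j\otimes W_j$ of slope $\mu(E)$ that is not a Higgs direct summand, contradicting the splitting of $(E\, ,\theta)$ into stable Higgs bundles of slope $\mu(E)$. This contradiction forces each $T^i_j$ to be semisimple, giving (2). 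The only genuinely delicate point is the isotypic splitting of $F$, for which the stability of the $E_j$ and the vanishing of cross-homomorphisms are exactly what is needed.
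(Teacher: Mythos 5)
Your proof is correct and follows essentially the same route as the paper's: both directions come down to the fact that polystability of $(E\,,\theta)$ is equivalent to its splitting into stable Higgs bundles of slope $\mu(E)$, which, via the isotypic structure of $E$ in \eqref{e1}, translates into simultaneous diagonalizability of the commuting families $\{T^i_j\}_i$. The paper asserts this in two lines; your write-up supplies the details it leaves implicit, namely the classification of slope-$\mu(E)$ subobjects as $\bigoplus_j E_j\otimes W_j$ and the invariant-complement characterization of semisimplicity.
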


\begin{proof}
First assume that the two conditions in the proposition are satisfied. The first condition
implies that $\theta \wedge\theta\, =\, 0$. The second condition implies that
$(E\, ,\theta)$ can be expressed as a direct sum of stable Higgs bundles of same slope.
Therefore, $(E\, ,\theta)$ is polystable.

Now assume that $(E\, ,\theta)$ is a polystable Higgs bundle. Since $\theta \wedge\theta\, =
\, 0$, the first condition in the proposition holds. The Higgs bundle $(E\, ,\theta)$ is a
direct sum of stable Higgs bundles of same slope. From this it follows that the second
condition in the proposition is satisfied.
\end{proof}

\begin{remark}\label{rem1}
{\rm A sum of commuting semisimple matrices is again semisimple. Therefore, the two
conditions in Proposition \ref{prop1} are independent of the choice of the trivialization
of $\Omega^1_M$.}
\end{remark}

\section{Yang--Mills Hermitian metric on polystable Higgs bundles}\label{sec3}

Let $\text{Aut}^0(M)$ denote the connected component, containing the identity element,
of the group of holomorphic automorphisms of $M$. The complex manifold $\text{Aut}^0(M)$
is isomorphic to $M$. If we consider $M$ as a complex abelian Lie group, then
$\text{Aut}^0(M)$ coincides with the group of translations of $M$.

There is a unique K\"ahler form $\widetilde\omega$ on $M$ such that
\begin{itemize}
\item the cohomology class of $\widetilde\omega$ coincides with $\omega$, and

\item the form $\widetilde\omega$ is preserved by the action of $\text{Aut}^0(M)$ on $M$.
\end{itemize}
The K\"ahler structure on $M$ given by $\widetilde\omega$ is flat. Fix the
K\"ahler form $\widetilde\omega$ on $M$.

\begin{proposition}\label{pr1}
Let $(E\, ,\theta)$ be a polystable Higgs bundle on $M$. There is a
Yang--Mills Hermitian metric $h$ on $E$ for the Higgs field $\theta$ such that
$h$ satisfies the Einstein--Hermitian equation for the polystable vector bundle $E$.
\end{proposition}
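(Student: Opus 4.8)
The plan is to build the desired metric $h$ explicitly out of the canonical decomposition of $E$ as a polystable bundle together with the normal form of $\theta$ furnished by Proposition \ref{prop1}. First I would apply Lemma \ref{lem2} to conclude that $E$ is polystable and fix the decomposition \eqref{e1}, namely $E\,=\,\bigoplus_{j=1}^\ell E_j\otimes{\mathbb C}^{n_j}$ with each $E_j$ stable and $\mu(E_j)\,=\,\mu(E)$. By \cite{UY}, \cite{Do} each stable factor $E_j$ carries an Einstein--Hermitian metric $h_j$, unique up to a positive scalar, with $\Lambda_{\widetilde\omega}{\mathcal K}_{h_j}\,=\,\lambda\sqrt{-1}\,{\rm Id}_{E_j}$. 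Since the Einstein constant $\lambda$ depends only on the slope $\mu(E_j)\,=\,\mu(E)$ and on the fixed K\"ahler class, it is the \emph{same} for every $j$; this uniformity of $\lambda$ is what will ultimately let a single metric work on all of $E$.

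Next I would put $\theta$ into a convenient form. Because $\widetilde\omega$ is flat, I may choose flat coordinates $z_1,\ldots,z_d$ in which $\widetilde\omega\,=\,\frac{\sqrt{-1}}{2}\sum_i dz_i\wedge d\bar z_i$, so that the induced trivialization $\{dz_i\}$ of $\Omega^1_M$ is orthonormal for $\widetilde\omega$. By Remark \ref{rem1} the two conditions of Proposition \ref{prop1} survive this change of trivialization, so writing $\theta\,=\,(\theta_1,\ldots,\theta_d)$ with $\theta_i\,=\,\bigoplus_j{\rm Id}_{E_j}\otimes T^i_j$, the matrices $T^1_j,\ldots,T^d_j$ commute and are semisimple for each fixed $j$. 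A commuting family of semisimple (hence diagonalizable) operators is simultaneously diagonalizable, so I fix for each $j$ a basis of ${\mathbb C}^{n_j}$ in which every $T^i_j$ is diagonal, and let $H_j$ be the Hermitian inner product on ${\mathbb C}^{n_j}$ making that basis orthonormal. Then I define $h\,:=\,\bigoplus_{j=1}^\ell h_j\otimes H_j$.

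With this $h$ I would verify both equations. As the multiplicity factors $({\mathbb C}^{n_j},H_j)$ carry flat metrics, the Chern connection of $h$ is the orthogonal direct sum of those of the $h_j$, whence ${\mathcal K}_h\,=\,\bigoplus_j{\mathcal K}_{h_j}\otimes{\rm Id}_{{\mathbb C}^{n_j}}$ and therefore $\Lambda_{\widetilde\omega}{\mathcal K}_h\,=\,\lambda\sqrt{-1}\,{\rm Id}_E$, which is precisely the Einstein--Hermitian equation for $E$. For the Yang--Mills equation, the choice of $H_j$ makes each $T^i_j$ normal, so $\theta_i$ is $h$--normal and $\theta_i^*\,=\,\bigoplus_j{\rm Id}_{E_j}\otimes(T^i_j)^*$ commutes with $\theta_i$. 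In the orthonormal coframe $\Lambda_{\widetilde\omega}$ sends $dz_i\wedge d\bar z_k$ to a constant multiple of $\delta_{ik}$, so the contribution of the Higgs term reduces to a constant multiple of $\sqrt{-1}\sum_i[\theta_i,\theta_i^*]$, which vanishes. Hence $\Lambda_{\widetilde\omega}({\mathcal K}_h+\theta\wedge\theta^*)\,=\,\lambda\sqrt{-1}\,{\rm Id}_E$, and $h$ solves the Yang--Mills equation for $(E,\theta)$ as well.

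The step I expect to be the crux is the vanishing of the contraction of the Higgs term $\theta\wedge\theta^*$: this is exactly what forces the Yang--Mills and Einstein--Hermitian conditions to be compatible for one and the same $h$, and it is what prevents the Yang--Mills constant from acquiring a block-dependent shift that would make a single constant impossible. Its vanishing rests precisely on Proposition \ref{prop1}: commutativity of the $T^i_j$ yields simultaneous diagonalizability, and semisimplicity lets the diagonalizing basis be taken orthonormal, so that each $\theta_i$ becomes normal and $[\theta_i,\theta_i^*]\,=\,0$. The only routine point to confirm is that, in an $\widetilde\omega$--orthonormal coframe, $\Lambda_{\widetilde\omega}$ retains only the diagonal $i\,=\,k$ terms; the preliminary passage to such a coframe is harmless by Remark \ref{rem1}.
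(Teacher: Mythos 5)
Your proposal is correct and follows essentially the same route as the paper: fix an orthonormal holomorphic trivialization of $\Omega^1_M$, use Proposition \ref{prop1} to simultaneously diagonalize the commuting semisimple matrices $T^i_j$, take the inner product on ${\mathbb C}^{n_j}$ making the eigenbasis orthonormal, tensor with Einstein--Hermitian metrics on the stable factors $E_j$, and observe that the Higgs term then contributes nothing to the contracted curvature. The only cosmetic difference is that the paper asserts the vanishing of $\theta\wedge\theta^*$ as a form, while you verify the (equivalent, and sufficient) vanishing of its $\Lambda_{\widetilde\omega}$--contraction via normality of each $\theta_i$.
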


\begin{proof}
Fix a trivialization of $\Omega^1_M$ using holomorphic sections of $\Omega^1_M$ 
that are pointwise orthonormal. Such a trivialization exists because
the connection on $\Omega^1_M$ corresponding to $\widetilde\omega$ is flat with
trivial monodromy. Take the endomorphisms $T^i_j$ in Proposition \ref{prop1}. 
Take any $j\, \in\, \{1\, ,\cdots\, , \ell\}$. Since $T^i_jT^k_j\,=\, T^k_jT^i_j$
for all $i\, ,k\,\in\, \{1\, ,\cdots\, ,d\}$, we have a
simultaneous eigenspace decomposition of
${\mathbb C}^{n_j}$ for the eigenvalues of $T^i_j$, $i\,\in\, \{1\, ,\cdots\, ,d\}$.
Fix an inner product $h_j$ on ${\mathbb C}^{n_j}$ such that the above decomposition
of ${\mathbb C}^{n_j}$ given by the eigenspaces of $\{T^i_j\}_{i=1}^d$ is orthogonal.

Fix an Einstein--Hermitian structure $h'_j$ on the stable vector bundle $E_j$ in
\eqref{e1}. The Hermitian structures $h_j$ and $h'_j$ together produce
a Hermitian structure on the vector bundle $E_j\otimes {\mathbb C}^{n_j}$ in \eqref{e1}.
These together in turn define a Hermitian structure $h$ on $E$ using the the isomorphism
in \eqref{e1} after imposing the condition that the subbundles
$E_j\otimes {\mathbb C}^{n_j}$ in \eqref{e1} are orthogonal.

The above Hermitian structure $h$ on $E$ clearly satisfies the Einstein--Hermitian
equation for the polystable vector bundle $E$.

Let $\theta^*\,\in\, C^\infty(M;\, End(E)\otimes\Omega^{0,1}_M)$ be the adjoint of
$\theta$. From the construction of $h$ it follows that $\theta\wedge\theta^*\,=\, 0$.
Using this it is straightforward to check that $h$ satisfies the Yang--Mills equation
for the Higgs bundle $(E\, ,\theta)$. 
\end{proof}

\begin{theorem}\label{thm1}
Let $(E\, ,\theta)$ be a polystable Higgs bundle on $M$. Let $h'$ be a
Yang--Mills Hermitian metric on $E$ for the Higgs field $\theta$. Then $h'$
satisfies the Einstein--Hermitian equation for the polystable vector bundle $E$.
\end{theorem}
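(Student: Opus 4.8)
The plan is to show that the given Yang--Mills metric $h'$ forces the Higgs contribution to the Yang--Mills operator to vanish after contraction, so that the Chern curvature alone satisfies the Einstein--Hermitian equation. Writing the Yang--Mills equation for $h'$ in the form $\Lambda_{\widetilde\omega}\mathcal{K}_{h'}\,=\, c\sqrt{-1}\cdot\text{Id}_E - \Lambda_{\widetilde\omega}(\theta\wedge\theta^*)$, where $\theta^*$ is the adjoint with respect to $h'$, it suffices to prove that $\Lambda_{\widetilde\omega}(\theta\wedge\theta^*)\,=\, 0$ as a smooth endomorphism of $E$. The constant $c$ is in any case pinned down by $\deg(E)$ and $\text{rank}(E)$ through Chern--Weil theory, and the displayed identity would exhibit $h'$ as an Einstein--Hermitian metric for $E$.

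First I would invoke Lemma \ref{lem2} to conclude that the underlying bundle $E$ is polystable, so that the decomposition \eqref{e1} is available, and then bring in the metric $h$ produced in Proposition \ref{pr1}: it is simultaneously a Yang--Mills metric for $(E\, ,\theta)$ and an Einstein--Hermitian metric for $E$, and for it one has $\theta\wedge\theta^*\,=\, 0$. The idea is then to compare the arbitrary Yang--Mills metric $h'$ with this explicit metric $h$. Both $h$ and $h'$ are Hermitian--Einstein metrics for the polystable Higgs bundle $(E\, ,\theta)$ with the same constant $c$ (equal, since $c$ is determined by the slope). By the uniqueness part of the Hitchin--Simpson theory \cite{Si1}, \cite{Si2}, two such metrics differ by a holomorphic automorphism $g$ of the Higgs bundle $(E\, ,\theta)$; in particular $g$ is a holomorphic automorphism of the vector bundle $E$ and $h'\,=\, g^*h$.

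To finish, I would use that the Einstein--Hermitian condition is preserved under pullback by a holomorphic bundle automorphism: since $g$ is holomorphic, the Chern connections satisfy $D_{h'}\,=\, g^{-1}\circ D_h\circ g$, hence $\mathcal{K}_{h'}\,=\, g^{-1}\mathcal{K}_h\, g$, and therefore $\Lambda_{\widetilde\omega}\mathcal{K}_{h'}\,=\, g^{-1}(\Lambda_{\widetilde\omega}\mathcal{K}_h)g\,=\, c\sqrt{-1}\cdot\text{Id}_E$, which is exactly the Einstein--Hermitian equation for $E$. The main obstacle is precisely the uniqueness input in the second step: justifying that any two Yang--Mills metrics for the polystable Higgs bundle differ by a holomorphic automorphism of the underlying bundle. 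If one prefers a self-contained argument over citing \cite{Si1}, \cite{Si2}, I would set $s\,=\, h^{-1}h'$, a positive $h$-self-adjoint endomorphism, subtract the two Yang--Mills equations, and run the Donaldson--Uhlenbeck--Yau maximum-principle computation adapted to the Higgs setting: integration by parts over the compact torus $M$, together with the flatness of $\widetilde\omega$ and the non-negativity of the relevant Higgs pairing, forces $s$ to be parallel for $D_h$ and to commute with $\theta$, so that $s$ (equivalently $g$) is a holomorphic automorphism of $E$ and the conclusion follows as above.
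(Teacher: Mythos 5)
Your proposal is correct and follows essentially the same route as the paper: both compare the given Yang--Mills metric $h'$ with the explicit metric $h$ of Proposition \ref{pr1} via a holomorphic automorphism of $E$ (the uniqueness input from \cite{Si1}, \cite{Si2}) and then transfer the Einstein--Hermitian property across that automorphism. The only (harmless) divergence is in the final step: you use that conjugation by a holomorphic automorphism preserves the central equation $\Lambda_{\widetilde\omega}{\mathcal K}\,=\,c\sqrt{-1}\cdot\text{Id}_E$, whereas the paper first shows the automorphism is parallel via \cite[p. 52, Theorem 1.9]{Ko}; both are valid.
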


\begin{proof}
Consider the Yang--Mills Hermitian metric $h$ on $E$ constructed in Proposition
\ref{pr1}. The two Hermitian structures $h$ and $h'$ differ by a holomorphic automorphism
of $E$. In other words, there is a holomorphic automorphism
$$
T\, :\, E\, \longrightarrow\, E
$$
such that
\begin{equation}\label{hp}
h'(v\, ,w)\,=\, h(T(v)\, ,T(w))
\end{equation}
for all $v\, , w\, \in\, E_x$ and all $x\, \in\, M$. From this we will derive that $h'$
satisfies the Einstein--Hermitian equation for the polystable vector bundle $E$.

Consider the holomorphic vector bundle $End(E)\,=\, E\otimes E^*$. The Hermitian
structure $h$ on $E$ produces a Hermitian structure on $End(E)$. The corresponding
Chern connection $\nabla$ on $End(E)$ is Einstein--Hermitian, because $h$ satisfies the
Einstein--Hermitian equation. Note that $c_1(End(E))\,=\, 0$. Therefore, the mean
curvature of the Einstein--Hermitian connection $\nabla$ on $End(E)$ vanishes identically
(see \cite[p. 51]{Ko} for mean curvature). Therefore, any holomorphic section of $End(E)$
is flat with respect to $\nabla$ \cite[p. 52, Theorem 1.9]{Ko}. In particular, the
the section $T$ in \eqref{hp} is flat with respect to $\nabla$.

Since $h$ satisfies the Einstein--Hermitian equation for $E$, and $T$ is flat with respect
to the connection $\nabla$ given by $h$, it follows that $h'$ defined by \eqref{hp} also
satisfies the Einstein--Hermitian equation for $E$.
\end{proof}

\section{Higgs $G$--bundles on $M$}

\subsection{Semistable and polystable Higgs $G$--bundles}\label{se4.1}

Let $G$ be a connected reductive affine algebraic group defined over $\mathbb C$. The
Lie algebra of $G$ will be denoted by $\mathfrak g$. For a holomorphic principal $G$--bundle
$E_G$ on $M$, let $\text{ad}(E_G)\,:=\, E_G\times^G \mathfrak g$ be the adjoint bundle.
A section
$$
\theta\,\in\, H^0(M,\, \text{ad}(E_G)\otimes\Omega^1_M)
$$
is called a \textit{Higgs field} on $E_G$ if $\theta\wedge\theta\,=\, 0$. A
{\it Higgs} $G$--{\it bundle} is a holomorphic principal $G$--bundle equipped with
a Higgs field.

The proof of the following lemma is very similar to the proof of Lemma \ref{lem1}.

\begin{lemma}\label{lem3}
Let $(E_G\, ,\theta)$ be a semistable Higgs $G$--bundle on $M$. Then the principal
$G$--bundle $E_G$ is semistable.
\end{lemma}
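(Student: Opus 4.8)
The plan is to mimic the proof of Lemma \ref{lem1}, adapting the vector bundle argument to the principal $G$--bundle setting via the Harder--Narasimhan reduction. Suppose, for contradiction, that $E_G$ is not semistable. Then there is a canonical Harder--Narasimhan reduction of structure group $E_P\,\subset\, E_G$ to a proper parabolic subgroup $P\,\subset\, G$; let $\mathfrak p\,\subset\,\mathfrak g$ be the Lie algebra of $P$. The instability of $E_G$ is detected by a strictly antidominant character of $P$ (equivalently, a reduction violating the slope condition), and the relevant object to control is the adjoint bundle $\text{ad}(E_P)\,=\, E_P\times^P\mathfrak p\,\subset\,\text{ad}(E_G)$, which is the sub-bundle of $\text{ad}(E_G)$ preserved by the reduction.

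First I would establish the analogue of the vanishing $H^0(M,\, Hom(E_1\, ,E_i/E_{i-1}))\,=\, 0$ used in Lemma \ref{lem1}. Concretely, the quotient bundle $\text{ad}(E_G)/\text{ad}(E_P)$ carries a natural filtration whose successive quotients have \emph{positive} slope (this is precisely the maximal destabilizing property of the Harder--Narasimhan reduction). Since $\text{ad}(E_P)$ has slope zero (the adjoint bundle of a semistable object, or more precisely the part with nonpositive weights), there are no nonzero homomorphisms from a slope-zero semistable sheaf into a sheaf all of whose quotients have positive slope. Consequently every global section of $\text{ad}(E_G)$ actually lands in $\text{ad}(E_P)$:
$$
H^0(M,\, \text{ad}(E_G))\,=\, H^0(M,\, \text{ad}(E_P))\, .
$$

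Now I invoke the key structural fact available on a torus, exactly as in Lemma \ref{lem1}: the bundle $\Omega^1_M$ is holomorphically trivial of rank $d$, so
$$
H^0(M,\, \text{ad}(E_G)\otimes\Omega^1_M)\,=\, H^0(M,\, \text{ad}(E_G))\otimes\Omega^1_M\, .
$$
Writing $\theta\,=\,(\theta_1\, ,\cdots\, ,\theta_d)$ with each $\theta_i\,\in\, H^0(M,\, \text{ad}(E_G))$ under a trivialization of $\Omega^1_M$, the preceding vanishing forces every $\theta_i\,\in\, H^0(M,\, \text{ad}(E_P))$. Hence $\theta$ is a section of $\text{ad}(E_P)\otimes\Omega^1_M$, which means the Higgs field $\theta$ is compatible with the reduction $E_P$. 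But then $E_P$ is a reduction to a parabolic violating the semistability slope inequality for which $\theta$ preserves the structure, contradicting the assumed semistability of the Higgs $G$--bundle $(E_G\, ,\theta)$. Therefore $E_G$ is semistable.

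The step I expect to be the main obstacle is making the cohomological vanishing argument precise in the $G$--bundle language, since the clean statement ``$H^0$ of $\text{ad}(E_G)$ equals $H^0$ of $\text{ad}(E_P)$'' requires correctly identifying the slopes of the graded pieces of $\text{ad}(E_G)/\text{ad}(E_P)$ and confirming they are strictly positive. This uses the fact that the Harder--Narasimhan reduction is to a parabolic determined by a dominant character, so the negative root spaces contribute the destabilizing (positive slope) directions; one must also ensure $\text{ad}(E_P)$ is itself semistable of slope zero so that the Hom-vanishing applies. Once that numerical bookkeeping is in place, the triviality of $\Omega^1_M$ does all the remaining work exactly as in the vector bundle case.
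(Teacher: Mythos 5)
Your overall architecture is exactly the paper's: assume $E_G$ unstable, take the Harder--Narasimhan reduction $E_P\subset E_G$, show $H^0(\mathrm{ad}(E_G)/\mathrm{ad}(E_P))=0$, use triviality of $\Omega^1_M$ to conclude $\theta$ lands in $\mathrm{ad}(E_P)\otimes\Omega^1_M$, and contradict Higgs semistability. The paper simply quotes \cite[p.~705, Corollary~1]{AAB} for the vanishing $H^0(U,\mathrm{ad}(E_G)/\mathrm{ad}(E_P))=0$. However, your justification of that vanishing contains a genuine error: you assert that the graded pieces of $\mathrm{ad}(E_G)/\mathrm{ad}(E_P)$ have \emph{positive} slope and that ``there are no nonzero homomorphisms from a slope-zero semistable sheaf into a sheaf all of whose quotients have positive slope.'' That principle is false (on an elliptic curve, $\mathrm{Hom}(\mathcal{O},L)\neq 0$ for any line bundle $L$ of positive degree), and the sign is backwards. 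The correct fact --- which is precisely how \cite{AAB} proves the vanishing, as the paper notes in its final section --- is that $\mu_{\max}(\mathrm{ad}(E_G)/\mathrm{ad}(E_P))<0$: the quotient corresponds to $\mathfrak{g}/\mathfrak{p}$, whose graded pieces are of the form $Hom(E_i/E_{i-1},E_j/E_{j-1})$ with $j>i$ in the vector-bundle picture, hence of strictly negative slope since the HN slopes decrease. A sheaf with $\mu_{\max}<0$ has no nonzero global sections, which is what you actually need. Relatedly, $\mathrm{ad}(E_P)$ does not have slope zero when $E_G$ is unstable; its degree is strictly positive (this is the measure of instability), dual to the negativity of the quotient.

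Two smaller points: the Harder--Narasimhan reduction is in general only defined over a dense open subset $U\subset M$ whose complement has codimension at least two (the HN filtration is by subsheaves, not subbundles), so the argument should be phrased over $U$; this is harmless for the definition of semistability of principal bundles but should be acknowledged. With the sign corrected and the vanishing attributed to $\mu_{\max}<0$, your proof coincides with the paper's.
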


\begin{proof}
Assume that the principal $G$--bundle $E_G$ is not semistable. Let
$$
E_P\, \subset\, E_G
$$
be the Harder--Narasimhan reduction of $E_G$ over the dense open subset $U$ associated
to $E_G$. We have
$$
H^0(U,\, \text{ad}(E_G)/\text{ad}(E_P)) \,=\, 0
$$
\cite[p. 705, Corollary 1]{AAB}. Since the vector bundle $\Omega^1_M$ is trivial, this
implies that the image of $\theta$ in $H^0(U,\, (\text{ad}(E_G)/\text{ad}(E_P))\otimes
\Omega^1_M)$ vanishes identically. In other words,
$$
\theta\, \in\, H^0(U,\, \text{ad}(E_P)\otimes \Omega^1_M)\, .
$$
Therefore, the above reduction $E_P$ contradicts the given condition that
the Higgs $G$--bundle $(E_G\, ,\theta)$ is semistable. Consequently, the principal
$G$--bundle $E_G$ is semistable.
\end{proof}

Lemma \ref{lem3} and Lemma \ref{lem4} are proved in \cite{FGN2} under the
assumption that $M$ is an elliptic curve.

\begin{lemma}\label{lem4}
Let $(E_G\, ,\theta)$ be a polystable Higgs $G$--bundle on $M$. Then the principal
$G$--bundle $E_G$ is polystable.
\end{lemma}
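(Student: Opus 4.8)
The plan is to reduce the assertion to the vector bundle case already settled in Lemma \ref{lem2}, by transporting the Higgs structure along the adjoint representation. Since $(E_G\, ,\theta)$ is polystable, it carries a Yang--Mills reduction $E_K\, \subset\, E_G$ to the maximal compact subgroup $K$; this is the existence half of the correspondence recalled in the Introduction \cite{Si2}, \cite{BS}. I would use $E_K$ as the source of Hermitian metrics on the associated bundles.

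Concretely, equip the adjoint bundle $\text{ad}(E_G)$ with the induced Higgs field $\text{ad}(\theta)\, :=\, [\theta\, ,-]\, \in\, H^0(M,\, End(\text{ad}(E_G))\otimes\Omega^1_M)$. The reduction $E_K$ induces a Hermitian structure on $\text{ad}(E_G)$, and because the adjoint representation sends $K$ into the unitary group of $\mathfrak g$, it carries the Yang--Mills property of $E_K$ to this induced metric. Consequently $(\text{ad}(E_G)\, ,\text{ad}(\theta))$ is a Higgs vector bundle admitting a Yang--Mills metric, hence is polystable. Applying Lemma \ref{lem2} to $(\text{ad}(E_G)\, ,\text{ad}(\theta))$, I conclude that the underlying holomorphic vector bundle $\text{ad}(E_G)$ is polystable.

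It then remains to descend polystability from $\text{ad}(E_G)$ to $E_G$ itself, using the fact that, for a connected reductive $G$, the principal bundle $E_G$ is polystable precisely when $\text{ad}(E_G)$ is a polystable vector bundle. By Lemma \ref{lem3} the bundle $E_G$ is already semistable, so the content is only to promote semistability to polystability once the adjoint bundle is known to be polystable; here the reductive structure of $G$ enters, the central torus of $G$ contributing a flat summand on which the adjoint action is trivial and which is automatically polystable.

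The main obstacle is exactly this last descent step. The adjoint representation is not faithful, so $\text{ad}(E_G)$ does not detect the reduction of $E_G$ to the centre of $G$; one must check separately that the central part of $E_G$, a bundle with abelian structure group and hence with no proper parabolic reductions, poses no obstruction, and that an admissible reduction of $E_G$ to a proper parabolic would force a destabilizing sub-object of the polystable bundle $\text{ad}(E_G)$. A cleaner alternative, avoiding the analytic input of the Yang--Mills metric, would be to imitate the argument of Lemma \ref{lem2} directly at the level of principal bundles, replacing the socle subsheaf by a canonical reduction of the semistable bundle $E_G$ to a Levi subgroup: such a reduction is preserved by every holomorphic section of $\text{ad}(E_G)$, hence, since $\Omega^1_M$ is trivial, by $\theta$, and one then splits it off using polystability of the Higgs bundle. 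Setting up this canonical Levi reduction and its splitting is the corresponding delicate point in that route.
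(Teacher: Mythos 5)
Your argument is essentially the paper's own proof: take a Yang--Mills connection for $(E_G\,,\theta)$, note that the induced connection on $\mathrm{ad}(E_G)$ solves the Yang--Mills equation for $(\mathrm{ad}(E_G)\,,\mathrm{ad}(\theta))$, conclude polystability of the Higgs bundle $(\mathrm{ad}(E_G)\,,\mathrm{ad}(\theta))$ and hence of the vector bundle $\mathrm{ad}(E_G)$ by Lemma \ref{lem2}, and then descend to $E_G$. The descent step you single out as the main obstacle is precisely \cite[p. 224, Corollary 3.8]{AB} (for connected reductive $G$, a semistable $E_G$ is polystable if and only if the vector bundle $\mathrm{ad}(E_G)$ is polystable), which the paper simply cites, so no further verification is required there.
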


\begin{proof}
Since $(E_G\, ,\theta)$ is polystable, it admits a Yang--Mills connection $\nabla$
\cite[p. 554, Theorem 4.6]{BS}. Let $\text{ad}(\theta)$ be the Higgs field on the
vector bundle $\text{ad}(E_G)$ induced by $\theta$. The connection on $\text{ad}(E_G)$
induced by $\nabla$ satisfies Yang--Mills equation for the Higgs bundle
$(\text{ad}(E_G)\, ,\text{ad}(\theta))$. Therefore,
$(\text{ad}(E_G)\, ,\text{ad}(\theta))$ is polystable. Hence the vector bundle
$\text{ad}(E_G)$ is polystable by Lemma \ref{lem2}. This implies that the principal
$G$--bundle $E_G$ is polystable \cite[p. 224, Corollary 3.8]{AB}.
\end{proof}

\subsection{A Levi reduction associated to a semisimple section}

Let $E_G$ be a holomorphic principal $G$--bundle over $M$. Let
\begin{equation}\label{eta}
\eta\, \in\, H^0(M,\, \text{ad}(E_G))
\end{equation}
be a section such that $\eta(x)\, \in\, \text{ad}(E_G)_x$ is semisimple for every $x\,
\in\, M$. Since the Lie algebra $\text{ad}(E_G)_x$ is identified
with the Lie algebra $\mathfrak g$ of $G$ up to an inner automorphism,
the element $\eta(x)\, \in\, \text{ad}(E_G)_x$ defines a conjugacy class in $\mathfrak g$.
Let
$$
C_x\, \subset\,\mathfrak g
$$
denote this orbit of $G$ in $\mathfrak g$ given by $\eta(x)$.

\begin{proposition}\label{prop2}
The above conjugacy class $C_x$ is independent of the point $x$.
\end{proposition}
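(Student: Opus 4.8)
The plan is to reduce the statement to a standard fact from invariant theory, namely that two semisimple elements of $\mathfrak g$ are $G$--conjugate if and only if they agree on all $G$--invariant polynomials, and then to exploit that a holomorphic function on the compact connected torus $M$ must be constant.

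First I would set up the adjoint quotient. Since $G$ is reductive, the algebra $\mathbb C[\mathfrak g]^G$ of $G$--invariant polynomial functions on $\mathfrak g$ (for the adjoint action) is finitely generated; choose generators $f_1,\, \ldots\, ,f_m$ and form the morphism $q\,=\,(f_1,\, \ldots\, ,f_m)\,:\,\mathfrak g\,\longrightarrow\, \mathbb C^m$, which is the adjoint quotient $\mathfrak g\,\longrightarrow\, \mathfrak g/\!\!/ G$. The essential input is the GIT statement that each fiber of $q$ contains a unique closed $G$--orbit, together with the fact that for the adjoint action of a reductive group the closed orbits are exactly the semisimple conjugacy classes. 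Consequently, two semisimple elements $X\, ,Y\,\in\,\mathfrak g$ lie in the same $G$--orbit if and only if $f(X)\,=\, f(Y)$ for every $f\,\in\,\mathbb C[\mathfrak g]^G$.

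Next I would transport these invariants to the bundle. Because each $f_k$ is invariant under $\text{Ad}(G)$, it descends to a well-defined holomorphic function on the total space of $\text{ad}(E_G)\,=\, E_G\times^G\mathfrak g$ that is polynomial along the fibers: for a point of $\text{ad}(E_G)_x$ represented by $(p\, ,v)$ with $p\,\in\, (E_G)_x$ and $v\,\in\,\mathfrak g$, the value $f_k(v)$ is independent of the chosen representative. Composing with the section $\eta$ of \eqref{eta} yields a holomorphic function $f_k\circ\eta\,:\, M\,\longrightarrow\,\mathbb C$ for each $k$. Since $M$ is compact and connected, each $f_k\circ\eta$ is constant, so the point $q(\eta(x))\,=\,(f_1(\eta(x)),\, \ldots\, ,f_m(\eta(x)))\,\in\,\mathbb C^m$ does not depend on $x$.

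Finally I would conclude. By hypothesis $\eta(x)$ is semisimple for every $x$, and by the previous step all the elements $\eta(x)$ have the same image under the adjoint quotient $q$. The invariant-theoretic fact recalled above then forces all the $\eta(x)$ to lie in a single $G$--orbit, that is, the conjugacy class $C_x$ is independent of $x$. I expect the only delicate point to be the implication from equality of invariants to conjugacy: this is genuinely false for arbitrary elements (a nonzero nilpotent shares all invariants with $0$ yet is not conjugate to it), and it is precisely the semisimplicity assumption on $\eta(x)$---equivalently, the identification of semisimple orbits with the closed orbits that are separated by invariants---that makes the argument valid.
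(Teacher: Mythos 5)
Your proof is correct and is essentially the paper's argument: the paper identifies the set of semisimple conjugacy classes with the affine variety $\mathfrak t/W$ (which by the Chevalley restriction theorem is the adjoint quotient $\operatorname{Spec}\mathbb C[\mathfrak g]^G$ you use) and invokes the constancy of holomorphic maps from the compact connected $M$ to an affine variety. Your version just unwinds that identification explicitly via generating invariants, including the needed point that semisimplicity is what makes equality of invariants imply conjugacy.
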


\begin{proof}
Fix a maximal torus $T\, \subset\, G$. Let $W\, :=\, N(T)/T$ be the corresponding
Weyl group, where $N(T)\, \subset\, G$ is the normalizer of $T$. The Lie algebra
of $T$ will be denoted by $\mathfrak t$. The space of semisimple conjugacy classes in
$\mathfrak g$ is identified with the quotient ${\mathfrak t}/W$.

Since ${\mathfrak t}/W$ is an affine variety, and $M$ is a compact connected complex
manifold, there are no nonconstant holomorphic maps from $M$ to ${\mathfrak t}/W$. This
immediately implies that $C_x$ is independent of the point $x$.
\end{proof}

Fix an element
\begin{equation}\label{eta3}
\eta'\, \in\, C_x\, \subset\, {\mathfrak g}\, .
\end{equation}
Let
\begin{equation}\label{eta2}
L\,=\, C(\eta') \, \subset\, G
\end{equation}
be the centralizer of $\eta'$. It is known that $L$ is a Levi subgroup of $G$
\cite[p. 26, Proposition 1.22]{DM}; we recall that a Levi subgroup of $G$ is a maximal
connected reductive subgroup of some parabolic subgroup of $G$.

\begin{proposition}\label{prop3}
Given $\eta$ and $\eta'$ as above, the principal $G$--bundle $E_G$ has a natural
holomorphic reduction of structure group to the subgroup $L$ defined in \eqref{eta2}.
\end{proposition}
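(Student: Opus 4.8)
The plan is to produce the reduction as a holomorphic section of the associated fiber bundle $E_G/L\,\longrightarrow\, M$, whose typical fiber is $G/L$; recall that holomorphic reductions of structure group of $E_G$ to the subgroup $L$ are in natural bijection with holomorphic sections of $E_G/L$. To build such a section out of $\eta$, the first step is to identify $G/L$ with an adjoint orbit. Since $L\,=\, C(\eta')$ is precisely the stabilizer of $\eta'$ for the adjoint action of $G$ on $\mathfrak g$, the orbit map $g\,\longmapsto\, \text{Ad}(g)(\eta')$ descends to an isomorphism of complex manifolds
$$
G/L\,\stackrel{\sim}{\longrightarrow}\, {\mathcal O}\, :=\, \{\text{Ad}(g)(\eta')\,:\, g\,\in\, G\}\,\subset\,\mathfrak g\, ;
$$
here ${\mathcal O}$ is the conjugacy class $C_x$ regarded as a $G$--invariant subset of $\mathfrak g$, and it is a smooth (indeed closed, since $\eta'$ is semisimple) subvariety of $\mathfrak g$.

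Using this identification I would rewrite $E_G/L$ as the associated bundle $E_G\times^G{\mathcal O}$. Because ${\mathcal O}\,\subset\,\mathfrak g$ is stable under the adjoint action, $E_G\times^G{\mathcal O}$ embeds as a holomorphic fiber subbundle of $\text{ad}(E_G)\,=\, E_G\times^G\mathfrak g$, namely the fiberwise adjoint orbit cut out by ${\mathcal O}$. The key observation is then that the section $\eta$ takes values in this subbundle: by construction $\eta(x)$ lies in the conjugacy class $C_x$, and Proposition \ref{prop2} together with the choice \eqref{eta3} gives $C_x\,=\,{\mathcal O}$ for every $x$. Hence $\eta$ is a holomorphic section of $E_G\times^G{\mathcal O}\,\cong\, E_G/L$, and the desired reduction $E_L\,\subset\, E_G$ is the inverse image, under the quotient map $E_G\,\longrightarrow\, E_G/L$, of the image of this section.

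The main point requiring care is the passage from the pointwise statement that $\eta(x)$ is semisimple with conjugacy class $C_x$ to the global statement that $\eta$ factors through the subbundle $E_G\times^G{\mathcal O}$; this is exactly where Proposition \ref{prop2} enters, since it guarantees that a single $G$--invariant subset ${\mathcal O}$ works simultaneously at all points $x\,\in\, M$. I also expect that verifying holomorphicity of the resulting reduction, rather than mere smoothness, will rest on the fact that the identification $G/L\,\cong\,{\mathcal O}$ and the inclusion ${\mathcal O}\,\hookrightarrow\,\mathfrak g$ are morphisms of varieties, so that the holomorphic section $\eta$ of $\text{ad}(E_G)$ landing in ${\mathcal O}$ indeed descends to a holomorphic section of $E_G/L$.
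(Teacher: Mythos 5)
Your proposal is correct and is essentially the same construction as the paper's: the paper directly defines the total space $\mathcal Z\,=\,p_1\bigl(q^{-1}(\eta(M))\cap (E_G\times \eta')\bigr)\,\subset\, E_G$, which is precisely the preimage under $E_G\,\longrightarrow\, E_G/L$ of the section you obtain from $\eta$ via the identification $E_G/L\,\cong\, E_G\times^G{\mathcal O}\,\subset\,{\rm ad}(E_G)$. Both arguments rest on the same two points --- Proposition \ref{prop2} giving a single orbit ${\mathcal O}$ for all $x$, and $L$ being the stabilizer of $\eta'$ --- so the fibers of $\mathcal Z$ are $L$--torsors; your write-up just makes explicit the verification the paper calls straightforward.
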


\begin{proof}
For any $g\, \in\, G$, let
$$
\text{Ad}(g)\, :\, {\mathfrak g}\, \longrightarrow\, {\mathfrak g}
$$
be the Lie algebra automorphism corresponding to the automorphism of the group $G$ defined
by $z\, \longmapsto\, g^{-1}zg$. We recall that $\text{ad}(E_G)$ is the quotient of
$E_G\times\mathfrak g$ where two point $(y_1\, ,v_1)\, ,(y_2\, ,v_2)\,\in\,
E_G\times\mathfrak g$ are identified if there is an element $g\, \in\, G$ such 
$y_2\,=\, y_1g$ and $v_2\,=\, \text{Ad}(g)(v_1)$. Let
$$
q\, :\, E_G\times{\mathfrak g}\,\longrightarrow\, \text{ad}(E_G)
$$
be the quotient map.

Let
$$
p_1\,:\, E_G\times{\mathfrak g}\,\longrightarrow\, E_G
$$
be the projection to the first factor. Define
\begin{equation}\label{cz}
{\mathcal Z}\,:=\, p_1((q^{-1}(\eta(M)))\cap (E_G\times \eta'))\, \subset\, E_G\, .
\end{equation}
It is straightforward to check that $\mathcal Z$ is a holomorphic reduction of structure
group of $E_G$ to the subgroup $L$.
\end{proof}

If $\eta'$ in \eqref{eta3} is replaced by $\text{Ad}(g)(\eta')$ for some $g\, \in\, G$,
then the subgroup $L$ in \eqref{eta2} gets replaced by $g^{-1}Lg$.

\begin{corollary}\label{cor2}
If $\eta'$ in \eqref{eta3} is replaced by ${\rm Ad}(g)(\eta')$ for some $g\, \in\, G$,
then $\mathcal Z$ in \eqref{cz} gets replaced by ${\mathcal Z}g\,\subset\,E_G$.
\end{corollary}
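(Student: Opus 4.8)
The plan is to reduce the statement to a one-line verification once the defining expression \eqref{cz} for $\mathcal Z$ is unwound into a pointwise condition. Writing $\pi\colon E_G\,\longrightarrow\, M$ for the bundle projection, I would first observe that, since the intersection $\eta(M)\cap\text{ad}(E_G)_{\pi(y)}$ consists of the single point $\eta(\pi(y))$, chasing a point $y$ through the maps $q$ and $p_1$ shows that
$$
\mathcal Z\,=\, \{y\,\in\, E_G\,:\, q(y\, ,\eta')\,=\, \eta(\pi(y))\}\, .
$$
In words, $y\,\in\,\mathcal Z$ exactly when $\eta'$ represents the value $\eta(\pi(y))$ under the isomorphism $\mathfrak g\,\longrightarrow\,\text{ad}(E_G)_{\pi(y)}$ sending $v$ to $q(y\, ,v)$, induced by the frame $y$. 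Denoting by $\mathcal Z'$ the subset obtained from \eqref{cz} after replacing $\eta'$ by $\text{Ad}(g)(\eta')$, the claim to be proved becomes the equality of subsets $\mathcal Z'\,=\,\mathcal Z g$.

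The key input is the transformation law of $q$ under right translation by $g$. From the description of $\text{ad}(E_G)$ as the quotient of $E_G\times\mathfrak g$ by the relation identifying $(y_1\, ,v_1)$ with $(y_1h\, ,\text{Ad}(h)(v_1))$, specializing $y_1\,=\, yg^{-1}$, $v_1\,=\,\eta'$ and $h\,=\, g$ yields
$$
q(yg^{-1}\, ,\eta')\,=\, q(y\, ,\text{Ad}(g)(\eta'))\, ,\qquad y\,\in\, E_G\, .
$$
This is the only nonformal ingredient, and it is also the single point requiring care: one must match the left/right conventions so that right translation by $g^{-1}$ on the frame corresponds to the adjoint action of $g$ on the $\mathfrak g$--factor, precisely as fixed in the proof of Proposition \ref{prop3}.

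Granting this, the corollary follows by substitution. Since $\pi(yg^{-1})\,=\,\pi(y)$, a point $y$ belongs to $\mathcal Z'$, i.e.\ satisfies $q(y\, ,\text{Ad}(g)(\eta'))\,=\,\eta(\pi(y))$, if and only if $q(yg^{-1}\, ,\eta')\,=\,\eta(\pi(yg^{-1}))$, which says exactly that $yg^{-1}\,\in\,\mathcal Z$, that is, $y\,\in\,\mathcal Z g$. Hence $\mathcal Z'\,=\,\mathcal Z g$. I do not anticipate any genuine obstacle here: the content is bookkeeping about the two group actions, and the whole argument hinges only on keeping the displayed identity for $q(yg^{-1}\, ,\eta')$ consistent with the conventions set earlier.
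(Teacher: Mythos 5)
Your proof is correct and is simply the detailed unwinding of the construction in \eqref{cz}, which is exactly what the paper's one-line proof (``this follows immediately from the construction'') asserts; in particular your identity $q(yg^{-1},\eta')=q(y,{\rm Ad}(g)(\eta'))$ is the right way to match the convention $\text{Ad}(g)(z)=g^{-1}zg$ fixed in Proposition \ref{prop3}. No discrepancy with the paper.
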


\begin{proof}
This follows immediately from the construction in \eqref{cz}.
\end{proof}

Let
$$
E_L\, \subset\, E_G
$$
be the reduction of structure group to $L$ constructed in Proposition \ref{prop3}.
Let $\text{ad}(E_L)$ be the adjoint vector bundle for $E_L$.

\begin{corollary}\label{cor3}
The subbundle ${\rm ad}(E_L)\, \subset\, {\rm ad}(E_G)$ is independent of the choice
of the element $\eta'$ in \eqref{eta3}.
\end{corollary}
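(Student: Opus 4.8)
The plan is to reduce the statement to the transitivity of the $G$--action on the conjugacy class $C_x$ and then to track the reduction through the relabeling recorded in Corollary \ref{cor2}. First I would fix two admissible choices $\eta'$ and $\eta''$ for the element in \eqref{eta3}. By Proposition \ref{prop2} both lie in the single orbit $C_x$, which is independent of $x$; since $C_x$ is by definition the $G$--orbit of $\eta(x)$ in $\mathfrak g$, there exists $g\, \in\, G$ with $\eta''\,=\, \text{Ad}(g)(\eta')$. Thus it suffices to compare the subbundle $\text{ad}(E_L)$ produced by $\eta'$ with the one produced by $\text{Ad}(g)(\eta')$, for an arbitrary $g\, \in\, G$.

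Next I would record, using \eqref{eta2} and Corollary \ref{cor2}, exactly how the data transforms under $\eta'\, \longmapsto\, \text{Ad}(g)(\eta')$: the Levi subgroup $L\,=\, C(\eta')$ is replaced by $L'\,=\, g^{-1}Lg$, whose Lie algebra is $\mathfrak l'\,=\, \text{Ad}(g)(\mathfrak l)$, while the reduction $\mathcal Z\,=\, E_L$ is replaced by $E_{L'}\,=\, \mathcal Z g\,=\, E_L g$. Here I use that the derivative of the inner automorphism $z\, \longmapsto\, g^{-1}zg$ is $\text{Ad}(g)$, as fixed in the proof of Proposition \ref{prop3}, so that $\mathfrak l'$ is indeed $\text{Ad}(g)(\mathfrak l)$.

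The central step is then a fiberwise identification inside $\text{ad}(E_G)\,=\, (E_G\times\mathfrak g)/{\sim}$. Over a point $x\, \in\, M$, the fiber $\text{ad}(E_{L'})_x$ consists of the classes $q(yg\, ,\text{Ad}(g)(v))$ with $y\, \in\, (E_L)_x$ and $v\, \in\, \mathfrak l$, since $(E_{L'})_x\,=\, (E_L)_x g$ and $\mathfrak l'\,=\, \text{Ad}(g)(\mathfrak l)$. By the very equivalence relation defining $q$, namely $(y_1\, ,v_1)\, \sim\, (y_2\, ,v_2)$ whenever $y_2\,=\, y_1 g$ and $v_2\,=\, \text{Ad}(g)(v_1)$, one has $q(yg\, ,\text{Ad}(g)(v))\,=\, q(y\, ,v)$. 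Hence $\text{ad}(E_{L'})_x\,=\, \{q(y\, ,v)\ :\ y\, \in\, (E_L)_x,\ v\, \in\, \mathfrak l\}\,=\, \text{ad}(E_L)_x$, and so the two subbundles of $\text{ad}(E_G)$ coincide.

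I do not expect a genuine obstacle here; the only real subtlety is bookkeeping of conventions. One must ensure that the same $g$ implementing $\eta''\,=\, \text{Ad}(g)(\eta')$ is the one appearing in Corollary \ref{cor2}, so that the right translation $E_L\, \longmapsto\, E_L g$ cancels precisely against $\text{Ad}(g)$ acting on $\mathfrak l$ under the quotient map $q$. Once the conventions are aligned, the verification is a direct application of the definition of the adjoint bundle, and no compactness or stability hypotheses enter this particular step.
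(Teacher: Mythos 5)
Your proposal is correct and follows the same route as the paper, which simply cites Corollary \ref{cor2}; you have merely made explicit the two details the paper leaves implicit, namely that any two choices of $\eta'$ in $C_x$ differ by some ${\rm Ad}(g)$, and that the right translation $E_L\,\longmapsto\, E_Lg$ cancels against ${\rm Ad}(g)$ acting on $\mathfrak l$ under the quotient map $q$ defining ${\rm ad}(E_G)$.
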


\begin{proof}
This follows immediately from Corollary \ref{cor2}.
\end{proof}

\begin{corollary}\label{cor4}
For any $x\, \in\, M$, the subalgebra ${\rm ad}(E_L)_x\, \subset\, {\rm ad}(E_G)_x$
coincides with the centralizer of $\eta(x)\, \in\, {\rm ad}(E_G)_x$.
\end{corollary}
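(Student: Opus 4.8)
The plan is to trivialize the adjoint bundle at a point of the reduction and to read off both $\text{ad}(E_L)_x$ and the centralizer of $\eta(x)$ from the same identification with $\mathfrak g$. No analytic input is needed; the whole content is the bookkeeping of the associated--bundle construction in Proposition \ref{prop3}.

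First I would record the infinitesimal form of the definition \eqref{eta2}. Since $L\,=\, C(\eta')$ consists of those $g\,\in\, G$ with $\text{Ad}(g)(\eta')\,=\,\eta'$, differentiating this relation at the identity shows that the Lie algebra $\mathfrak l$ of $L$ is exactly $\{v\,\in\,\mathfrak g\, :\, [v\, ,\eta']\,=\, 0\}$, the centralizer of $\eta'$ in $\mathfrak g$.

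Next, fix $x\,\in\, M$ and choose a point $z\,\in\,\mathcal Z$ lying over $x$; such a point exists because $\mathcal Z$ is the total space of the reduction $E_L$ and therefore surjects onto $M$. Let $\phi_z\, :\,\mathfrak g\,\longrightarrow\,\text{ad}(E_G)_x$ be the map $v\,\longmapsto\, q(z\, ,v)$, which is an isomorphism of Lie algebras. By the definition \eqref{cz} of $\mathcal Z$, the pair $(z\, ,\eta')$ lies in $q^{-1}(\eta(M))$, so $\phi_z(\eta')\,=\, q(z\, ,\eta')$ lies in $\eta(M)\cap\text{ad}(E_G)_x\,=\,\{\eta(x)\}$; hence $\phi_z(\eta')\,=\,\eta(x)$. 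Moreover, since $z$ lies in the total space of the $L$--reduction, the construction of the associated bundle gives $\phi_z(\mathfrak l)\,=\,\text{ad}(E_L)_x$.

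Finally I would combine these observations. Because $\phi_z$ is an isomorphism of Lie algebras, it carries the centralizer of $\eta'$ in $\mathfrak g$ onto the centralizer of $\phi_z(\eta')\,=\,\eta(x)$ in $\text{ad}(E_G)_x$. The first step identifies the centralizer of $\eta'$ with $\mathfrak l$, and the previous step identifies $\phi_z(\mathfrak l)$ with $\text{ad}(E_L)_x$; together these give that $\text{ad}(E_L)_x$ is the centralizer of $\eta(x)$, as claimed. The one point deserving attention is that the identification $\phi_z(\mathfrak l)\,=\,\text{ad}(E_L)_x$ should not depend on the auxiliary choices: for a fixed $z\,\in\, E_L$ it is immediate from the associated--bundle construction, and independence of the choice of $\eta'$ is already guaranteed by Corollary \ref{cor3}. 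This is the only place where a little care is required, and it is the closest thing to an obstacle.
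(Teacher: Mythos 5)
Your proof is correct and is exactly the argument the paper intends: the paper's own proof just says the claim "follows from the construction of $E_L$ in \eqref{cz}," and your write-up supplies the bookkeeping behind that one line (the point $z$ of the reduction, the isomorphism $\phi_z$ sending $\eta'$ to $\eta(x)$ and $\mathfrak l$ onto ${\rm ad}(E_L)_x$, and the identification of $\mathfrak l$ with the centralizer of $\eta'$, which in characteristic zero is indeed the full infinitesimal centralizer). No gap; same approach, worked out in more detail than the paper.
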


\begin{proof}
This follows from the construction of $E_L$ in \eqref{cz}.
\end{proof}

\subsection{A Levi reduction associated to a semisimple Higgs field}

Let $(E_G\, ,\theta)$ be a Higgs $G$--bundle on $M$. For any
$\alpha\, \in\, H^0(M,\, TM)$, we have
$$
\theta(\alpha)\, \in\, H^0(M,\, \text{ad}(E_G))\, .
$$
Take a basis $\{\alpha_1\, , \cdots\, , \alpha_d\}$ of $H^0(M,\, TM)$.

\begin{lemma}\label{lem5}
If $\theta(\alpha_i)$ is pointwise semisimple for every $i\, \in\, \{1\, ,\cdots\, ,
d\}$, then for any $\alpha\, \in\, H^0(M,\, TM)$, the section $\theta(\alpha)$ is
pointwise semisimple.
\end{lemma}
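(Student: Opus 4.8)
The plan is to reduce the assertion to a pointwise statement about commuting semisimple elements of the reductive Lie algebra $\mathfrak g$, the essential extra ingredient being the Higgs condition $\theta\wedge\theta=0$. First I would use that $TM$ is holomorphically trivial, so that every $\alpha\in H^0(M,\,TM)$ is a constant linear combination $\alpha\,=\,\sum_{i=1}^d c_i\alpha_i$ with $c_i\,\in\,{\mathbb C}$, whence $\theta(\alpha)\,=\,\sum_i c_i\,\theta(\alpha_i)$ as sections of $\text{ad}(E_G)$. Fixing $x\,\in\, M$ and setting $s_i\,:=\,\theta(\alpha_i)(x)\,\in\,\text{ad}(E_G)_x$, it then suffices to prove that $\sum_i c_i s_i$ is semisimple, knowing that each $s_i$ is semisimple.

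The key observation is that the $s_i$ pairwise commute. Using the holomorphic trivialization of $\Omega^1_M$ dual to $\{\alpha_i\}$, write $\theta\,=\,\sum_i\theta(\alpha_i)\otimes\omega_i$; then $\theta\wedge\theta\,=\,\sum_{i<j}[\theta(\alpha_i),\theta(\alpha_j)]\otimes(\omega_i\wedge\omega_j)$. Since the forms $\omega_i\wedge\omega_j$, $i<j$, constitute a frame of the trivial bundle $\Omega^2_M$, the vanishing $\theta\wedge\theta\,=\,0$ forces $[\theta(\alpha_i),\theta(\alpha_j)]\,=\,0$ for all $i,j$. Evaluating at $x$, the semisimple elements $s_1,\ldots,s_d$ pairwise commute in $\text{ad}(E_G)_x\,\cong\,\mathfrak g$.

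It remains to establish the purely Lie-theoretic fact that a $\mathbb C$-linear combination of pairwise commuting semisimple elements of a reductive Lie algebra is again semisimple. I would deduce this by fixing a faithful representation $\rho\,:\,\mathfrak g\,\longrightarrow\,\mathfrak{gl}(V)$, which exists because $G$ is affine: semisimplicity of an element is preserved by $\rho$, so each $\rho(s_i)$ is diagonalizable, and a commuting family of diagonalizable operators on $V$ is simultaneously diagonalizable. Hence $\rho\bigl(\sum_i c_i s_i\bigr)\,=\,\sum_i c_i\rho(s_i)$ is diagonalizable, and since semisimplicity of an element of a reductive Lie algebra is detected by its image under any faithful representation, $\sum_i c_i s_i$ is semisimple. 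As $\theta(\alpha)(x)\,=\,\sum_i c_i s_i$, this proves that $\theta(\alpha)$ is pointwise semisimple.

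The main, and essentially only, obstacle is this last algebraic step: a sum of semisimple elements is not semisimple in general, so the commutativity extracted from $\theta\wedge\theta=0$ is indispensable. One may instead avoid choosing $\rho$ by showing directly that $s_1,\ldots,s_d$ lie in a common Cartan subalgebra of $\mathfrak g$, arguing inductively that the centralizer of a semisimple element is a reductive (Levi) subalgebra --- as already used in the paper via \cite[p. 26, Proposition 1.22]{DM} --- in which the remaining $s_i$ stay semisimple, so that all their linear combinations lie in that Cartan and are therefore semisimple. A minor point worth recording is that the identification $\text{ad}(E_G)_x\,\cong\,\mathfrak g$ is canonical only up to an inner automorphism, but this is harmless since both semisimplicity and the relations $[s_i,s_j]=0$ are preserved by Lie algebra automorphisms.
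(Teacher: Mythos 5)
Your proposal is correct and follows exactly the paper's route: extract $[\theta(\alpha_i),\theta(\alpha_j)]=0$ from $\theta\wedge\theta=0$ using the triviality of $\Omega^1_M$, and then invoke the fact that a linear combination of commuting semisimple elements of $\mathfrak g$ is semisimple (which the paper states without proof and you verify via a faithful representation and simultaneous diagonalization). The only difference is that you supply the details the paper leaves implicit.
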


\begin{proof}
Since $\theta$ is a Higgs field, we have $[\theta(\alpha_i)\, ,\theta(\alpha_j)]\,=\,
0$ for every $i\, ,j\,\in\, \{1\, ,\cdots\, , d\}$. The lemma follows from the
fact that a sum of commuting semisimple elements of $\mathfrak g$ is again semisimple.
\end{proof}

Assume that $\theta(\alpha_i)$ is pointwise semisimple for every
$i\, \in\, \{1\, ,\cdots\, , d\}$. Let $L_1\, \subset\, G$ be the Levi subgroup
constructed as in \eqref{eta2} for the section $\theta(\alpha_1)$. Let
$$
E_{L_1}\, \subset\, E_G
$$
be the reduction constructed as in Proposition \ref{prop2} for $\theta(\alpha_1)$.
Since $[\theta(\alpha_1)\, ,\theta(\alpha_2)]\,=\, 0$, from Corollary \ref{cor4}
we know that
$$
\theta(\alpha_2)\, \subset\, H^0(M,\, \text{ad}(E_{L_1}))\,\subset\,
H^0(M,\, \text{ad}(E_G))\, .
$$
Therefore, proceeding inductively, we get from the Higgs field $\theta$
\begin{itemize}
\item a Levi subgroup $L\, \subset\, G$, and

\item a holomorphic reduction of structure group
\begin{equation}\label{el}
E_L\, \subset\, E_G
\end{equation}
to $L$.
\end{itemize}

The subgroup $L$ is unique up to a conjugation. If $L$ is replaced by
$g^{-1}Lg$ for some $g\, \in\, G$, then $E_L$ gets replaced by $E_Lg$. Consequently,
the subbundle
$$
\text{ad}(E_L)\, \subset\, \text{ad}(E_G)
$$
is uniquely determined by $\theta$. Also, note that
\begin{equation}\label{el2}
\theta\, \in\, H^0(M,\, \text{ad}(E_L)\otimes\Omega^1_M) \,\subset\,
H^0(M,\, \text{ad}(E_G)\otimes\Omega^1_M)\, .
\end{equation}
{}From Corollary \ref{cor4} it follows that for every $i\, \in\, \{1\, ,\cdots\, , d\}$
and $x\, \in\, M$, the subalgebra $\text{ad}(E_L)_x\, \subset\, \text{ad}(E_G)_x$ is
contained in the centralizer of $\theta(\alpha_i)(x)$. More precisely,
$\text{ad}(E_L)_x$ is the centralizer of the subset $\{\theta(\alpha_1)(x)\, ,
\cdots\, , \theta(\alpha_d)(x)\}\,\subset\, \text{ad}(E_G)_x$.

\section{Yang--Mills structure on polystable Higgs $G$--bundles on $M$}

Let $(E_G\, ,\theta)$ be a polystable Higgs $G$--bundle on $M$.

\begin{proposition}\label{prop4}
For any $i\, \in\, \{1\, ,\cdots\, , d\}$ and $x\, \in\, M$, the element
$\theta(\alpha_i)(x)\,\in\, {\rm ad}(E_G)_x$ is semisimple.
\end{proposition}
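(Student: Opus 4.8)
The plan is to reduce the assertion to the polystability of the Higgs \emph{vector} bundle $(\text{ad}(E_G)\, ,\text{ad}(\theta))$, which was already established in the proof of Lemma \ref{lem4}, combined with Proposition \ref{prop1} and the standard Lie-theoretic fact that an element of a reductive Lie algebra is semisimple exactly when its adjoint action is a semisimple endomorphism. The point is that Proposition \ref{prop1} gives pointwise semisimplicity for the Higgs field of a polystable Higgs vector bundle, and applying it to the adjoint bundle lets me detect semisimplicity of $\theta(\alpha_i)(x)$ through $\text{ad}(\theta(\alpha_i)(x))$.

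First I would recall that, by the argument in the proof of Lemma \ref{lem4}, the Higgs vector bundle $(\text{ad}(E_G)\, ,\text{ad}(\theta))$ is polystable, where $\text{ad}(\theta)$ is the induced Higgs field on $\text{ad}(E_G)$. Next, choose the holomorphic trivialization of $\Omega^1_M$ given by the basis of $H^0(M\, ,\Omega^1_M)$ dual to $\{\alpha_1\, ,\cdots\, ,\alpha_d\}$. With this choice, the $i$-th component of $\text{ad}(\theta)$ in the notation preceding Proposition \ref{prop1} is exactly the endomorphism $\text{ad}(\theta(\alpha_i))\,=\,[\theta(\alpha_i)\, ,-]$ of $\text{ad}(E_G)$. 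Writing $\text{ad}(E_G)$ in the isotypic form of \eqref{e1} and the components of $\text{ad}(\theta)$ in the form of \eqref{e5}, Proposition \ref{prop1} forces each block $T^i_j$ to be semisimple. Since the value at a point $x$ of the $i$-th component is the direct sum $\bigoplus_j \text{Id}\otimes T^i_j$, it is diagonalizable; hence $\text{ad}(\theta(\alpha_i)(x))$ is a semisimple endomorphism of the fibre $\text{ad}(E_G)_x\,=\,\mathfrak g$ for every $x$ and every $i$.

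It then remains to transfer this back to the element $\theta(\alpha_i)(x)$ itself, and here I would use that for the Lie algebra $\mathfrak g$ of a reductive group an element $v$ is semisimple if and only if $\text{ad}(v)$ is a semisimple endomorphism of $\mathfrak g$. Indeed, if $v\,=\,v_s+v_n$ is the Jordan decomposition of $v$, then, since $\text{ad}$ is the differential of the morphism $\text{Ad}\,:\,G\,\longrightarrow\, GL(\mathfrak g)$, it preserves the Jordan decomposition, so $\text{ad}(v)\,=\,\text{ad}(v_s)+\text{ad}(v_n)$ is the Jordan decomposition of $\text{ad}(v)$. Thus $\text{ad}(v)$ semisimple gives $\text{ad}(v_n)\,=\,0$, i.e. $v_n$ lies in the centre of $\mathfrak g$; as $G$ is reductive, the centre of $\mathfrak g$ consists of semisimple elements, so the nilpotent element $v_n$ is also semisimple and therefore vanishes, whence $v$ is semisimple. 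Applying this with $v\,=\,\theta(\alpha_i)(x)$ yields the proposition.

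The only delicate step is this last one: one must invoke reductivity of $G$ so that the centre of $\mathfrak g$ contains no nonzero nilpotent element; for a general Lie algebra the equivalence between semisimplicity of $v$ and of $\text{ad}(v)$ can fail. Everything else amounts to assembling facts already in hand, namely the polystability of $(\text{ad}(E_G)\, ,\text{ad}(\theta))$ and Proposition \ref{prop1}, together with Remark \ref{rem1} to guarantee that the conclusion is independent of the chosen trivialization and hence holds for every $\alpha_i$.
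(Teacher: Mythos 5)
Your proof is correct, and its skeleton is the same as the paper's: pass from the polystable Higgs $G$--bundle to an associated polystable Higgs \emph{vector} bundle, invoke Proposition \ref{prop1} to get pointwise semisimplicity of the induced endomorphisms, and then descend to semisimplicity in $\mathfrak g$. The difference lies in how the two steps are instantiated. The paper takes an arbitrary representation $\rho\,:\,G\,\longrightarrow\,\mathrm{GL}(V)$ with $\rho(Z(G))$ central, shows $(E_V\, ,\theta_V)$ is polystable via the induced Yang--Mills connection, and then asserts that semisimplicity of $\theta_V(\alpha_i)(x)$ for all such $\rho$ forces semisimplicity of $\theta(\alpha_i)(x)$. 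You instead fix the single representation $\mathrm{Ad}$ (which is one of the representations the paper allows, since $\mathrm{Ad}(Z(G))$ is trivial), reuse the polystability of $(\mathrm{ad}(E_G)\, ,\mathrm{ad}(\theta))$ already established in the proof of Lemma \ref{lem4}, and then make the descent explicit via the Jordan decomposition: $\mathrm{ad}(v)$ semisimple forces $v_n$ central, and the centre of a reductive $\mathfrak g$ has no nonzero nilpotents. This buys you a more self-contained argument -- the final implication, which the paper leaves terse, is fully justified -- at the cost of needing the (standard, but nontrivial) facts that $\mathrm{ad}$ preserves Jordan decompositions and that $\mathfrak z(\mathfrak g)$ consists of semisimple elements; your own caveat that reductivity is essential here is exactly the right one. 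The paper's formulation, by quantifying over all admissible $\rho$, sidesteps the ad-criterion but implicitly relies on an equivalent separation statement.
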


\begin{proof}
Let
\begin{equation}\label{zg}
Z(G)\, \subset\, G
\end{equation}
be the connected component of the center of $G$
containing the identity element. Take a finite dimensional holomorphic representation
$$
\rho\, :\,G\,\longrightarrow\, \text{GL}(V)
$$
such that $\rho(Z(G))$ is contained in the center of $\text{GL}(V)$. Let
$$
E_V\, :=\, E_G\times^G V\, \longrightarrow\, M
$$
be the vector bundle associated to $E_G$ for this $G$--module $V$. The Higgs field
$\theta$ induces a Higgs field on $E_V$. This induced Higgs field on $E_V$ will be
denoted by $\theta_V$. The connection on $E_V$ induced by a Yang--Mills connection
for $(E_G\, ,\theta)$ satisfies the Yang--Mills equation for the Higgs bundle
$(E_V\, ,\theta_V)$. This implies that $(E_V\, ,\theta_V)$ is polystable.

Now from Proposition \ref{prop1} we conclude that $\theta_V(\alpha_i)(x)
\,\in\, \text{End}(E_V)_x$ is semisimple for every
$i\, \in\, \{1\, ,\cdots\, , d\}$ and $x\, \in\, M$. Since $\rho$ is an arbitrary
holomorphic representation such that $\rho(Z(G))$ is contained in the center of
$\text{GL}(V)$, this implies that $\theta(\alpha_i)(x)$ is semisimple.
\end{proof}

Consider the Higgs $L$--bundle $(E_L\, ,\theta)$ constructed from the given polystable
Higgs $G$--bundle $(E_G\, ,\theta)$ (see \eqref{el}, \eqref{el2}). We note that
$(E_L\, ,\theta)$ is polystable because $(E_G\, ,\theta)$ is so. From Lemma \ref{lem4}
we know that the principal $L$--bundle $E_L$ is polystable.

As in Section \ref{sec3}, fix the K\"ahler form $\widetilde\omega$ on $M$. 
Fix a maximal compact subgroup
$$
K_L\, \subset\, L\, .
$$
Let
$$
E_{K_L}\, \subset\, E_L
$$
be a $C^\infty$ reduction of structure group to $K_L$ that solves the Yang--Mills
equation for $(E_L\, ,\theta)$ (see \cite[p. 554, Theorem 4.6]{BS}).

\begin{proposition}\label{pr2}
The above reduction
$$
E_{K_L}\, \subset\, E_L
$$
solves the Einstein--Hermitian equation for the polystable principal $L$--bundle
$E_L$.
\end{proposition}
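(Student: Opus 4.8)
The plan is to prove the proposition by showing that the Higgs contribution $\theta\wedge\theta^*$ vanishes identically; once this is established the Yang--Mills equation satisfied by $E_{K_L}$ collapses to the Einstein--Hermitian equation for $E_L$. The key structural input is the centrality of $\theta$ that is built into the Levi reduction. Indeed, by Corollary~\ref{cor4} together with the discussion following \eqref{el2}, for every $x\,\in\, M$ the fiber $\text{ad}(E_L)_x$ is the centralizer in $\text{ad}(E_G)_x$ of the set $\{\theta(\alpha_1)(x)\, ,\cdots\, ,\theta(\alpha_d)(x)\}$. Since each $\theta(\alpha_i)(x)$ itself lies in $\text{ad}(E_L)_x$ (the brackets $[\theta(\alpha_i)\, ,\theta(\alpha_j)]$ vanish because $\theta$ is a Higgs field), every $\theta(\alpha_i)(x)$ commutes with all of $\text{ad}(E_L)_x$. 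Hence $\theta(\alpha_i)(x)$ lies in the center $\mathfrak{z}(\text{ad}(E_L)_x)$, so that $\theta$ is in fact a section of $\mathfrak{z}(E_L)\otimes\Omega^1_M$, where $\mathfrak{z}(E_L)\,\subset\,\text{ad}(E_L)$ denotes the central subbundle.

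The next step is to observe that the adjoint $\theta^*$, constructed from the reduction $E_{K_L}$, is likewise a section of $\mathfrak{z}(E_L)\otimes\Omega^{0,1}_M$. The reduction $E_{K_L}\,\subset\, E_L$ induces, fiberwise, the conjugation of $\text{ad}(E_L)_x$ with respect to the compact real form determined by $K_L$; this conjugation is a conjugate-linear automorphism of the Lie algebra $\text{ad}(E_L)_x$, and $\theta^*$ is obtained from $\theta$ by applying it. A Lie algebra automorphism always preserves the center, so this conjugation maps $\mathfrak{z}(\text{ad}(E_L)_x)$ to itself. Combined with the previous step, this shows that $\theta^*$ takes values in the central subbundle $\mathfrak{z}(E_L)$.

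With both $\theta$ and $\theta^*$ valued in $\mathfrak{z}(E_L)$, the conclusion is immediate. Fixing the trivialization of $\Omega^1_M$ as in Section~\ref{sec3}, the $\text{ad}(E_L)$--valued two-form $\theta\wedge\theta^*$ is obtained by combining the Lie algebra parts of $\theta$ and $\theta^*$ via the bracket of $\text{ad}(E_L)$ and wedging the form parts, and hence is assembled from the pointwise brackets $[\theta(\alpha_i)(x)\, ,\theta(\alpha_j)^*(x)]$. Each such bracket is a commutator of two central elements of $\text{ad}(E_L)_x$, so it vanishes, giving $\theta\wedge\theta^*\,=\, 0$ identically. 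Therefore the Yang--Mills identity $\Lambda_{\widetilde\omega}({\mathcal K}+\theta\wedge\theta^*)\,=\, c$ for the reduction $E_{K_L}$ reduces to $\Lambda_{\widetilde\omega}({\mathcal K})\,=\, c$ with the same constant $c\,\in\,\mathfrak{z}(\mathfrak{k}_L)$, which is exactly the Einstein--Hermitian equation for the principal $L$--bundle $E_L$.

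The one point requiring genuine care is the second step, namely that the adjoint operation defined by $E_{K_L}$ preserves the central subbundle $\mathfrak{z}(E_L)$; everything else is formal once the centrality of $\theta$ coming from the Levi construction is invoked. An alternative route, parallel to the proof of Theorem~\ref{thm1}, would be to fix a faithful representation of $L$ and deduce the statement from the vector bundle case, but the direct Lie-algebra argument above is shorter and exploits the Levi reduction precisely as it was designed.
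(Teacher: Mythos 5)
Your proposal is correct and follows essentially the same route as the paper: both arguments use the fact that $\text{ad}(E_L)_x$ centralizes each $\theta(\alpha_i)(x)$ to place $\theta$, and hence $\theta^*$, in the center of $\text{ad}(E_L)$, forcing $\theta\wedge\theta^*\,=\,0$ so that the Yang--Mills equation collapses to the Einstein--Hermitian equation. Your explicit justification that the compact-real-form conjugation defined by $E_{K_L}$ is a (conjugate-linear) Lie algebra automorphism and therefore preserves the center is a detail the paper leaves implicit, and it is a worthwhile addition.
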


\begin{proof}
We observed earlier that for every $i\, \in\, \{1\, ,\cdots\, , d\}$
and $x\, \in\, M$, the subalgebra $\text{ad}(E_L)_x\, \subset\, \text{ad}(E_G)_x$ is
contained in the centralizer of $\theta(\alpha_i)(x)$. Therefore,
for every $i\, \in\, \{1\, ,\cdots\, , d\}$
and $x\, \in\, M$, the element $\theta^*(\overline{\alpha_i})(x)
\,\in\, \text{ad}(E_L)_x$ also is
contained in the center of $\text{ad}(E_L)_x$. Consequently, we have
$\theta\wedge\theta^*\,=\, 0$. This immediately implies that the reduction
$$
E_{K_L}\, \subset\, E_L
$$
solves the Einstein--Hermitian equation for the polystable principal $L$--bundle
$E_L$.
\end{proof}

Fix a maximal compact subgroup
$$
K\, \subset\, G
$$
such that $K\cap L\,=\, K_L$.

\begin{theorem}\label{thm2}
Let $(E_G\, ,\theta)$ be a polystable Higgs $G$--bundle on $M$.
Let
$$
E_K\, \subset\, E_G
$$
be a $C^\infty$ reduction of structure group to $K$ that solves the Yang--Mills equation
for $(E_G\, ,\theta)$. Then the reduction $E_K\, \subset\, E_G$ solves the
Einstein--Hermitian equation for the polystable principal $G$--bundle $E_G$.
\end{theorem}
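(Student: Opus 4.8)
The plan is to reduce the statement to the vector bundle case already settled in Theorem \ref{thm1}, applied to the adjoint bundle $\text{ad}(E_G)$, and then to extract the Einstein--Hermitian property of $E_K$ itself by a short Lie-algebraic argument. I prefer this route to the alternative suggested by the preceding machinery (comparing $E_K$ with the extension to $K$ of the reduction $E_{K_L}$ of Proposition \ref{pr2}, via uniqueness of Yang--Mills connections and a flatness argument as in Theorem \ref{thm1}), because that route must separately check that the Einstein constant of $E_{K_L}$, a priori lying in ${\mathfrak z}({\mathfrak k}_L)$, actually lands in ${\mathfrak z}({\mathfrak k})$, which is essentially the same difficulty in disguise.

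First I would transport the structure to $\text{ad}(E_G)$. The reduction $E_K$ determines a Hermitian structure on the Higgs vector bundle $(\text{ad}(E_G)\, ,\text{ad}(\theta))$ whose Chern connection is the one induced by the connection of $E_K$; its curvature is $\text{ad}({\mathcal K})$, and the adjoint of $\text{ad}(\theta)$ with respect to this invariant metric is $\text{ad}(\theta^*)$. Applying the homomorphism $\text{ad}$ to the Yang--Mills equation $\Lambda_{\widetilde\omega}({\mathcal K}+\theta\wedge\theta^*)\,=\,c$, and using that $c\,\in\,{\mathfrak z}({\mathfrak k})$ is central so that $\text{ad}(c)\,=\,0$, I obtain
$$
\Lambda_{\widetilde\omega}(\text{ad}({\mathcal K})+\text{ad}(\theta)\wedge\text{ad}(\theta)^*)\,=\,0\, .
$$
Thus the induced Hermitian structure solves the Yang--Mills equation for $(\text{ad}(E_G)\, ,\text{ad}(\theta))$ with constant $0$.

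Next I would invoke polystability. As observed in the proof of Lemma \ref{lem4}, the Higgs bundle $(\text{ad}(E_G)\, ,\text{ad}(\theta))$ is polystable, so Theorem \ref{thm1} applies and shows that the induced Hermitian structure satisfies the Einstein--Hermitian equation for the vector bundle $\text{ad}(E_G)$, that is, $\Lambda_{\widetilde\omega}(\text{ad}({\mathcal K}))\,=\,\lambda\sqrt{-1}\cdot\text{Id}$ for some constant $\lambda$. Taking the trace and using that $\text{ad}$ of any element of $\mathfrak g$ is traceless (equivalently $c_1(\text{ad}(E_G))\,=\,0$) forces $\lambda\,=\,0$. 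Hence $\text{ad}(\Lambda_{\widetilde\omega}{\mathcal K})\,=\,0$, which means that the section $\Lambda_{\widetilde\omega}{\mathcal K}$ of $\text{ad}(E_G)$ is pointwise central.

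The final and main step is purely Lie-algebraic. From the Yang--Mills equation, $\Lambda_{\widetilde\omega}(\theta\wedge\theta^*)\,=\,c-\Lambda_{\widetilde\omega}{\mathcal K}$ is then also pointwise central in $\text{ad}(E_G)$. On the other hand, in the pointwise orthonormal trivialization of $\Omega^1_M$ fixed earlier one has $\Lambda_{\widetilde\omega}(\theta\wedge\theta^*)\,=\,\sum_{i} [\theta(\alpha_i)\, ,\theta^*(\overline{\alpha_i})]$ up to a positive constant, so this section takes values in the derived subalgebra $[{\mathfrak g}\, ,{\mathfrak g}]$ at every point. Since the center of $\mathfrak g$ is orthogonal to $[{\mathfrak g}\, ,{\mathfrak g}]$ for any invariant bilinear form, a central element lying in $[{\mathfrak g}\, ,{\mathfrak g}]$ must vanish; hence $\Lambda_{\widetilde\omega}(\theta\wedge\theta^*)\,=\,0$. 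Consequently $\Lambda_{\widetilde\omega}{\mathcal K}\,=\,c\,\in\,{\mathfrak z}({\mathfrak k})$, which is exactly the Einstein--Hermitian equation for $E_G$. The only delicate points are the bookkeeping ones: that passing to $\text{ad}(E_G)$ sends the Yang--Mills constant $c$ to $0$ precisely because $c$ is central, and that the mean-curvature contribution of $\theta\wedge\theta^*$ is a sum of brackets, so that its forced centrality makes it vanish.
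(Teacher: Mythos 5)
Your proof is correct, but it follows a genuinely different route from the paper's. The paper constructs an explicit reference solution at the level of the principal bundle: it passes to the Levi reduction $E_L$ determined by the pointwise semisimple sections $\theta(\alpha_i)$ (Propositions \ref{prop2}--\ref{prop4}), shows via Proposition \ref{pr2} that a Yang--Mills reduction $E_{K_L}$ of $E_L$ has $\theta\wedge\theta^*\,=\,0$ identically, extends it to a reduction $E'_K\,\subset\, E_G$ that is simultaneously Yang--Mills and Einstein--Hermitian, and then transports this to the given $E_K$ by writing $E_K\,=\,T(E'_K)$ for a holomorphic automorphism $T$ and proving $T$ is flat via \cite[p.~52, Theorem 1.9]{Ko}. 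You instead bypass the Levi machinery entirely: you push the given solution to $({\rm ad}(E_G),{\rm ad}(\theta))$, where the Yang--Mills constant becomes $0$ because ${\rm ad}(c)\,=\,0$, invoke Theorem \ref{thm1} together with $c_1({\rm ad}(E_G))\,=\,0$ to conclude that $\Lambda_{\widetilde\omega}{\mathcal K}$ is pointwise central, and then observe that $\Lambda_{\widetilde\omega}(\theta\wedge\theta^*)\,=\,c-\Lambda_{\widetilde\omega}{\mathcal K}$ is simultaneously central and a pointwise sum of brackets, hence lies in ${\mathfrak z}({\mathfrak g})\cap[{\mathfrak g},{\mathfrak g}]\,=\,0$ by the reductive decomposition ${\mathfrak g}\,=\,{\mathfrak z}({\mathfrak g})\oplus[{\mathfrak g},{\mathfrak g}]$ (your appeal to orthogonality under an invariant form is a slightly roundabout way of saying this, but the fact is standard). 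Each step checks out: the induced metric on ${\rm ad}(E_G)$ is indeed the Chern metric of the induced connection once the inner product on ${\mathfrak g}$ is chosen ${\rm Ad}(K)$--invariantly and compatibly with the Cartan involution, so that ${\rm ad}(\theta)^*\,=\,{\rm ad}(\theta^*)$, and the polystability of $({\rm ad}(E_G),{\rm ad}(\theta))$ needed for Theorem \ref{thm1} is already recorded in the proof of Lemma \ref{lem4} (or follows directly from the Yang--Mills metric you have in hand). What your argument buys is economy --- for this theorem you need none of Propositions \ref{prop2}--\ref{prop4} and \ref{pr2} --- and it yields the sharper bookkeeping that the Einstein constant of $E_K$ equals the original Yang--Mills constant $c$; what the paper's route buys is the structural information about $\theta$ (the canonical Levi reduction, and the identity $\theta\wedge\theta^*\,=\,0$ for a suitable reduction rather than merely $\Lambda_{\widetilde\omega}(\theta\wedge\theta^*)\,=\,0$), which is of independent interest.
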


\begin{proof}
As before, let $(E_L\, ,\theta)$ be the Higgs $L$--bundle constructed from the
polystable Higgs $G$--bundle $(E_G\, ,\theta)$ (see \eqref{el}, \eqref{el2}). Take
a $C^\infty$ reduction
$$
E_{K_L}\, \subset\, E_L
$$
that solves the Yang--Mills equation for $(E_L\, ,\theta)$. Let
$$
E'_K\, :=\, E_{K_L}(K)\,\longrightarrow\, M
$$
be the principal $K$--bundle obtained by extending the structure group of
$E_{K_L}$ using the inclusion of $K_L$ in $K$. We note that $E'_K$ is a reduction
of structure group of $E_G$ to $K$ because $E_{K_L}$ is a reduction
of structure group of $E_G$ to $K_L$. The above reduction
$$
E'_K\, \subset\, E_G
$$
solves the Yang--Mills equation for $(E_G\, ,\theta)$ because the reduction
$E_{K_L}\,\subset\, E_L$ solves the Yang--Mills equation for $(E_L\, ,\theta)$.

Therefore, there is a holomorphic automorphism $T$ of $E_G$ such that
$E_K\,=\, T(E'_K)$.

Let $\text{Ad}(E_G)\,=\, E_G\times^G G\,\longrightarrow \, M$ be the
holomorphic fiber bundle associated to
$E_G$ for the adjoint action of $G$ on itself. It can be shown that the holomorphic
sections of $\text{Ad}(E_G)$ are flat with respect to the connection on $\text{Ad}(E_G)$
induced by the connection on $E_G$ given by the reduction $E'_K$. To prove this, take any
finite dimensional holomorphic $G$--module
$$
\rho\, :\, G\, \longrightarrow\, \text{GL}(V)
$$
such that $\rho(Z(G))$ (see \eqref{zg}) is contained in the center of $\text{GL}(V)$. Let 
$$
E_V\,:=\, E_G\times^G V\,\longrightarrow \, M
$$
be the associated vector bundle. The Einstein--Hermitian connection on $E_G$ given by
the reduction $E'_K$ produces an Einstein--Hermitian connection on ${\rm End}(E_V)$;
this Einstein--Hermitian connection on ${\rm End}(E_V)$ will be denoted by
$\nabla'$.

Given any holomorphic section $T'$ of $\text{Ad}(E_G)$, let
$T''$ be the automorphism of $E_V$ given by $T'$. As done in the proof of
Theorem \ref{thm1}, using \cite[p. 52, Theorem 1.9]{Ko} we conclude that the section
$T''$ of $End(E)$ is flat with respect to $\nabla'$. From this it follows that the
automorphism $T'$ of $E_G$ is flat with respect to the connection on $\text{Ad}(E_G)$
induced by the connection on $E_G$ given by the reduction $E'_K$.

In particular, the earlier automorphism $T$ is flat
with respect to the connection on $\text{Ad}(E_G)$
corresponding to the reduction $E'_K$. From this it follows that the 
reduction $E_K\, \subset\, E_G$ solves the
Einstein--Hermitian equation for the polystable principal $G$--bundle $E_G$.
\end{proof}

\section{Higgs bundles on K\"ahler manifolds with nonnegative tangent bundle}

Let $X$ be a compact connected K\"ahler manifold equipped with a K\"ahler class
$\omega$. Let
$$
W_1\, \subset\, \cdots \, \subset\, W_m\,=\, \Omega^1_X
$$
be the Harder--Narasimhan filtration of $\Omega^1_X$.

\begin{lemma}\label{le1}
Assume that $\mu_{\rm max}(\Omega^1_X)\,:=\, \mu(W_1) \, <\, 0$. Let $(E
\, , \theta)$ be a semistable Higgs bundle on $X$. Then $\theta\,=\, 0$.
\end{lemma}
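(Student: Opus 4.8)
The plan is to reduce first to the case where the underlying bundle $E$ is semistable, and then to read off $\theta\,=\,0$ from a slope comparison. It is convenient to rewrite the hypothesis as
$$
\mu_{\rm min}(TX)\,=\,-\mu_{\rm max}(\Omega^1_X)\,>\,0\, ,
$$
so that the tangent bundle has all its Harder--Narasimhan slopes positive, and to regard the Higgs field as a sheaf homomorphism $\theta\,:\,TX\,\longrightarrow\, End(E)$ arising from the identification $H^0(X,\,End(E)\otimes\Omega^1_X)\,=\,Hom(TX,\,End(E))$. The point will be that once $E$ is semistable, $End(E)$ is semistable of slope $0$, and there is no nonzero homomorphism from a bundle with strictly positive slopes into a semistable bundle of slope $0$.

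First I would show that $E$ itself is semistable. Let $F\,\subset\, E$ be the maximal destabilizing subsheaf, so that $F$ is semistable with $\mu(F)\,=\,\mu_{\rm max}(E)\,\geq\,\mu(E)$ and $\mu_{\rm max}(E/F)\,<\,\mu(F)$. Consider the composite homomorphism
$$
F\,\hookrightarrow\, E\,\longrightarrow\, E\otimes\Omega^1_X\,\longrightarrow\, (E/F)\otimes\Omega^1_X\, ,
$$
where the middle arrow is $\theta$ and the last arrow is induced by the projection $E\,\to\, E/F$. Using the standard bound $\mu_{\rm max}(A\otimes B)\,\leq\, \mu_{\rm max}(A)+\mu_{\rm max}(B)$ together with $\mu_{\rm max}(E/F)\,<\,\mu(F)$ and $\mu_{\rm max}(\Omega^1_X)\,<\,0$, one obtains $\mu_{\rm max}((E/F)\otimes\Omega^1_X)\,<\,\mu(F)$. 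Hence this composite is a homomorphism from a semistable sheaf of slope $\mu(F)$ into a sheaf whose maximal slope is strictly smaller, so it must vanish. Therefore $\theta(F)\,\subset\, F\otimes\Omega^1_X$, i.e.\ $F$ is a $\theta$--invariant subsheaf. Semistability of the Higgs bundle $(E\, ,\theta)$ then forces $\mu(F)\,\leq\,\mu(E)$, and combined with $\mu(F)\,=\,\mu_{\rm max}(E)\,\geq\,\mu(E)$ this yields $\mu_{\rm max}(E)\,=\,\mu(E)$; that is, $E$ is semistable.

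With $E$ semistable, the bundle $End(E)\,=\,E\otimes E^*$ is semistable of slope $0$. I would then examine the image $I\,\subset\, End(E)$ of the homomorphism $\theta\,:\,TX\,\longrightarrow\, End(E)$. As a quotient of $TX$ one has $\mu_{\rm min}(I)\,\geq\,\mu_{\rm min}(TX)\,>\,0$, while as a subsheaf of the semistable bundle $End(E)$ of slope $0$ one has $\mu_{\rm max}(I)\,\leq\,0$. Since $\mu_{\rm min}(I)\,\leq\,\mu_{\rm max}(I)$, this is impossible unless $I\,=\,0$, whence $\theta\,=\,0$. The step carrying the real content is the proof that $F$ is $\theta$--invariant, where the negativity of $\mu_{\rm max}(\Omega^1_X)$ enters essentially; the main technical point to be careful about is that the slope inequality for tensor products and the semistability of $End(E)$ are invoked over a general compact Kähler manifold rather than a projective variety, so I would appeal to the Kähler versions of the tensor-product theorem for (semi)stable sheaves rather than to the purely algebraic statements.
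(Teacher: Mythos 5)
Your proposal is correct and follows essentially the same route as the paper: first show the maximal destabilizing subsheaf is $\theta$--invariant via the slope bound $\mu_{\rm max}((E/F)\otimes\Omega^1_X)\,<\,\mu(F)$ (forcing $E$ to be semistable), then kill $\theta$ by the vanishing of homomorphisms into a sheaf of strictly smaller maximal slope. Your reformulation of the last step in terms of the image of $TX\,\longrightarrow\,End(E)$ is only a cosmetic variant of the paper's direct observation that $H^0(X,\,End(E)\otimes\Omega^1_X)\,=\,0$.
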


\begin{proof}
To prove that $E$ is semistable, consider $E_1$ in \eqref{ee1}.
We have
$$
H^0(X,\, Hom(E_1\, ,(E/E_1)\otimes \Omega^1_X))\,=\, 0
$$
because $\mu_{\rm max}((E/E_1)\otimes \Omega^1_X)\,=\, \mu_{\rm max}(E/E_1)+
\mu_{\rm max}(\Omega^1_X)\, <\, \mu(E_1)+0 \,=\,\mu(E_1)$. Form this it follows
that $\theta(E_1)\, \subset\, E_1\otimes\Omega^1_X$. Since
$(E \, , \theta)$ is semistable, this implies that $E\,=\,E_1$. So $E$ is semistable.

Since $E$ is semistable,
$$
\mu_{\rm max}(E\otimes \Omega^1_X)\,=\,
\mu(E)+ \mu_{\rm max}(\Omega^1_X)\, <\, \mu(E)\, .
$$
Hence $H^0(X,\, End(E)\otimes \Omega^1_X)\,=\, 0$. In particular, $\theta\,=\, 0$.
\end{proof}

Combining the proofs of Lemma \ref{lem3} and Lemma \ref{le1} it is easy to deduce
that Lemma \ref{le1} remains valid for Higgs $G$--bundles on $X$. The only point to
note is that \cite[p. 705, Corollary 1]{AAB} (which is used in the proof of
Lemma \ref{lem3}) is proved by showing that
$\mu_{\rm max}(\text{ad}(E_G)/\text{ad}(E_P))\, <\, 0$.

\subsection{Higgs bundles on Calabi--Yau manifolds}

Let $X$ be a compact connected K\"ahler manifold such that $c_1(TX) \,\in\, H^2(X,\,
{\mathbb Q})$ is zero. These are known as Calabi--Yau manifolds. Fix a K\"ahler class
$\omega$ on $X$. A celebrated theorem of Yau says that there is a K\"ahler form
$\widetilde\omega$ in the class $\omega$ such that the Ricci curvature for $\widetilde
\omega$ vanishes identically \cite{Ya} (this was conjectured earlier by Calabi). In
particular, $\widetilde\omega$ is an Einstein--Hermitian structure on $\Omega^1_X$. This
implies that the vector bundle $\Omega^1_X$ is polystable.

\begin{lemma}\label{le2}
Let $(E \, , \theta)$ be a semistable Higgs bundle on $X$. Then the vector bundle
$E$ is semistable. This is also true for Higgs $G$--bundles, meaning if $(E_G \, ,
\theta)$ is a semistable Higgs $G$--bundle on $X$, then the underlying principal
$G$--bundle $E_G$ is semistable.
\end{lemma}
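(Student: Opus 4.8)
The plan is to reduce the statement to two properties that Yau's theorem supplies for $\Omega^1_X$ on a Calabi--Yau manifold: that $\Omega^1_X$ is polystable, and that $\mu(\Omega^1_X)\,=\,0$ (the latter because $c_1(TX)\,=\,0$ forces $\deg\Omega^1_X\,=\,0$). Together these give that $\Omega^1_X$ is semistable with $\mu_{\rm max}(\Omega^1_X)\,=\,\mu(\Omega^1_X)\,=\,0$. This is precisely the analogue of the hypothesis $\mu_{\rm max}(\Omega^1_X)\,<\,0$ of Lemma \ref{le1}, with the strict inequality weakened to an equality, so the proof of Lemma \ref{le1} transfers to give the semistability of $E$ (though of course not the conclusion $\theta\,=\,0$, which genuinely required strictness).

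For the vector bundle case I would argue by contradiction as in Lemma \ref{le1}. Assuming $E$ is not semistable, take the Harder--Narasimhan filtration \eqref{ee1} and let $E_1$ be its maximal destabilizing subsheaf. The one slope estimate needed is
$$
\mu_{\rm max}((E/E_1)\otimes\Omega^1_X)\,=\,\mu_{\rm max}(E/E_1)+\mu_{\rm max}(\Omega^1_X)\,=\,\mu_{\rm max}(E/E_1)\,<\,\mu(E_1)\, ,
$$
where the middle equality uses $\mu_{\rm max}(\Omega^1_X)\,=\,0$ and the final strict inequality is the defining property of the Harder--Narasimhan filtration, $\mu_{\rm max}(E/E_1)$ being the slope of the second graded piece, strictly smaller than $\mu(E_1)\,=\,\mu_{\rm max}(E)$. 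Here the strictness that came from $\Omega^1_X$ in Lemma \ref{le1} is instead supplied by the filtration. This estimate forces $H^0(X,\, Hom(E_1\, ,(E/E_1)\otimes\Omega^1_X))\,=\,0$, whence $\theta(E_1)\,\subset\,E_1\otimes\Omega^1_X$, contradicting the semistability of $(E\, ,\theta)$.

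For the Higgs $G$--bundle case I would mirror Lemma \ref{lem3}. Assuming $E_G$ is not semistable, take the Harder--Narasimhan reduction $E_P\,\subset\,E_G$ over the associated dense open subset $U$; as recorded in the paragraph following Lemma \ref{le1}, the input \cite[p. 705, Corollary 1]{AAB} is proved by establishing $\mu_{\rm max}(\text{ad}(E_G)/\text{ad}(E_P))\,<\,0$. Tensoring with $\Omega^1_X$ preserves negativity, since $\mu_{\rm max}((\text{ad}(E_G)/\text{ad}(E_P))\otimes\Omega^1_X)\,=\,\mu_{\rm max}(\text{ad}(E_G)/\text{ad}(E_P))+0\,<\,0$, so this sheaf has no nonzero sections and the image of $\theta$ in $H^0(U,\, (\text{ad}(E_G)/\text{ad}(E_P))\otimes\Omega^1_X)$ vanishes. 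Hence $\theta\,\in\,H^0(U,\, \text{ad}(E_P)\otimes\Omega^1_X)$, contradicting the semistability of $(E_G\, ,\theta)$.

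The one point that is not a formal repetition of the earlier arguments --- and the step I expect to be the main obstacle --- is the additivity of $\mu_{\rm max}$ under tensoring by $\Omega^1_X$ used in the two displays above. On a compact K\"ahler manifold this rests on the theorem that a tensor product of semistable sheaves is semistable, which here is not purely algebraic but follows from the Kobayashi--Hitchin correspondence: a semistable sheaf carries an approximate Hermitian--Einstein structure, and since $\Omega^1_X$ is polystable with its Ricci--flat Einstein--Hermitian metric (of Einstein factor $0$), the tensor product again carries such a structure and is therefore semistable. Granting this, the additivity $\mu_{\rm max}(A\otimes\Omega^1_X)\,=\,\mu_{\rm max}(A)+\mu_{\rm max}(\Omega^1_X)$ follows by passing to Harder--Narasimhan graded pieces, and both contradictions above close.
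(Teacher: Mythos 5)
Your proposal is correct and follows essentially the same route as the paper, which simply states that the proof of Lemma \ref{le1} remains valid because $\Omega^1_X$ is polystable of slope zero, and that for the $G$--bundle case one invokes $\mu_{\rm max}({\rm ad}(E_G)/{\rm ad}(E_P))\,<\,0$ from \cite[p. 705, Corollary 1]{AAB}. You have merely made explicit the two points the paper leaves implicit --- that the strictness formerly supplied by $\mu_{\rm max}(\Omega^1_X)\,<\,0$ is now supplied by the Harder--Narasimhan filtration (respectively by the strict negativity of $\mu_{\rm max}({\rm ad}(E_G)/{\rm ad}(E_P))$), and that the additivity of $\mu_{\rm max}$ under tensoring rests on the semistability of tensor products of semistable sheaves over a compact K\"ahler manifold.
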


\begin{proof}
Since $\Omega^1_X$ is polystable of slope zero, the proof of it given in Lemma
\ref{le1} remains valid. To prove for Higgs $G$--bundles, just note that
for $\text{ad}(E_P)$ in the proof of Lemma \ref{lem3} we have
$\mu_{\rm max}(\text{ad}(E_G)/\text{ad}(E_P))\, <\, 0$ (see the proof of
Corollary 1 in \cite[p. 705]{AAB}).
\end{proof}

\begin{lemma}\label{le3}
Let $(E \, , \theta)$ be a polystable Higgs bundle on $X$. Then the vector bundle
$E$ is polystable.
\end{lemma}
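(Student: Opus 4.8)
The plan is to follow the proof of Lemma \ref{lem2} almost verbatim, the only essential difference being that on a Calabi--Yau manifold $\Omega^1_X$ is no longer trivial, so the step asserting that the socle of $E$ is preserved by $\theta$ must be argued differently. First I would invoke Lemma \ref{le2}: since a polystable Higgs bundle is in particular semistable, the underlying bundle $E$ is semistable. Then, exactly as in Lemma \ref{lem2}, let $V\subset E$ be the subsheaf generated by all polystable subsheaves of $E$ of slope $\mu(E)$; this \emph{socle} $V$ is polystable with $\mu(V)=\mu(E)$ \cite[page 23, Lemma 1.5.5]{HL}, and by its maximality every polystable subsheaf of $E$ of slope $\mu(E)$ is contained in $V$.

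The key step is to prove that $\theta(V)\subset V\otimes\Omega^1_X$. Here I would exploit Yau's theorem, which gives that $\Omega^1_X$ is polystable of slope zero, hence so is its dual $TX$. The restriction $\theta|_V\colon V\,\longrightarrow\, E\otimes\Omega^1_X$ corresponds, under the adjunction $Hom(V,\,E\otimes\Omega^1_X)\,\cong\, Hom(V\otimes TX,\,E)$, to a morphism $\Psi\colon V\otimes TX\,\longrightarrow\, E$. Since the tensor product of polystable sheaves on a compact Kähler manifold is again polystable with additive slope (via Hermitian--Einstein metrics, \cite{Ko}), the sheaf $V\otimes TX$ is polystable of slope $\mu(E)$. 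Its image $\Psi(V\otimes TX)\subset E$ is then a quotient of a polystable sheaf of slope $\mu(E)$, hence semistable of slope $\geq\mu(E)$; being at the same time a subsheaf of the semistable bundle $E$ of slope $\mu(E)$, its slope is exactly $\mu(E)$, and a same-slope quotient of a polystable sheaf is polystable. Thus $\Psi(V\otimes TX)$ is a polystable subsheaf of $E$ of slope $\mu(E)$, so it lies in $V$ by maximality of the socle. Translating this back through the adjunction, the inclusion $\Psi(V\otimes TX)\subset V$ says precisely that $\theta(V)\subset V\otimes\Omega^1_X$.

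Once the socle is shown to be $\theta$-invariant, the argument concludes exactly as in Lemma \ref{lem2}. If $E$ were not polystable then $V\,\neq\, E$; since $(E\, ,\theta)$ is a polystable Higgs bundle, $\mu(V)=\mu(E)$, and $\theta(V)\subset V\otimes\Omega^1_X$, the Higgs subsheaf $(V\, ,\theta|_V)$ would admit a complementary $\theta$-invariant subsheaf $V'\subset E$ with $\mu(V')=\mu(E)$ and $V\cap V'\,=\,0$. A polystable subsheaf $V''\subset V'$ of slope $\mu(E)$ would then be contained in $V$ by definition of the socle, contradicting $V\cap V'\,=\,0$. Hence $E$ is polystable.

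I expect the main obstacle to be precisely the step $\theta(V)\subset V\otimes\Omega^1_X$: over the torus this was immediate from the triviality of $\Omega^1_M$, whereas here the substitute is the adjunction with $TX$ combined with the polystability of the tensor product $V\otimes TX$, which in turn rests on $\Omega^1_X$ being polystable of slope zero. Everything else, including the socle construction and the extraction of a complementary Higgs subsheaf, is identical to the torus case and requires no new input.
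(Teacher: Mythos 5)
Your proof is correct and is essentially the paper's own argument: the paper likewise defines the contraction $\theta'\colon TX\otimes E\to E$, uses Yau's theorem to get $TX$ polystable of slope zero, concludes that $\theta'(TX\otimes V)$ is a polystable subsheaf of slope $\mu(E)$ hence contained in the socle $V$, and then finishes as in Lemma \ref{lem2}. The only difference is that you spell out a few intermediate justifications (the quotient/subsheaf slope comparison) that the paper leaves implicit.
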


\begin{proof}
{}From Lemma \ref{le2} we know that $E$ is semistable. As in the proof of Lemma
\ref{lem2},
$$
V\,\subset\, E
$$
is the coherent analytic subsheaf generated by all polystable subsheaves of $E$
with slope $\mu(E)$. Let $\theta'\,:\, TX\otimes E\, \longrightarrow\, E$
be the following composition homomorphism
$$
TX\otimes E\,\stackrel{\text{Id}_{TX}\otimes\theta}{\longrightarrow}\, TX\otimes
\Omega^1_X\otimes E\,\stackrel{{\rm trace}\otimes{\rm Id}_E}{\longrightarrow}\, E\, .
$$
Since both $TX$ and $V$ are polystable, it follows that $TX\otimes V$ is polystable.
Also, note that $\mu(TX\otimes V)\,=\, \mu(V)\,=\, \mu(E)$. Therefore, the image
$$
\theta'(TX\otimes V)\, \subset\, E
$$
is polystable with $\mu(\theta'(TX\otimes V))\,=\, \mu(E)$. Hence, we have
$$
\theta'(TX\otimes V)\, \subset\, V\, .
$$
This implies that $\theta(V)\, \subset\, V\otimes\Omega^1_X$. Now the last part
of the proof of Lemma \ref{lem2} shows that $E$ is polystable.
\end{proof}

\begin{lemma}\label{le4}
Let $(E_G \, , \theta)$ be a polystable Higgs $G$--bundle on $X$. Then the principal
$G$-- bundle $E_G$ is polystable.
\end{lemma}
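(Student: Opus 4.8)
The plan is to follow the proof of Lemma \ref{lem4} almost verbatim, replacing the use of Lemma \ref{lem2} (the torus statement) by its Calabi--Yau counterpart Lemma \ref{le3}. The point is that the reduction from principal bundles to vector bundles uses nothing special about the base manifold; only the passage ``polystable Higgs vector bundle implies polystable vector bundle'' does, and that has already been reproved for Calabi--Yau $X$ in Lemma \ref{le3}.

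First I would invoke the fact that a polystable Higgs $G$--bundle admits a Yang--Mills connection \cite[p. 554, Theorem 4.6]{BS}; this existence theorem holds for an arbitrary compact connected K\"ahler manifold, so in particular for our Calabi--Yau $X$. Call this connection $\nabla$. Next, consider the adjoint vector bundle $\text{ad}(E_G)$ equipped with the induced Higgs field $\text{ad}(\theta)$. The connection induced by $\nabla$ on $\text{ad}(E_G)$ satisfies the Yang--Mills equation for the Higgs bundle $(\text{ad}(E_G)\, ,\text{ad}(\theta))$, and therefore $(\text{ad}(E_G)\, ,\text{ad}(\theta))$ is itself a polystable Higgs vector bundle.

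Now I would apply Lemma \ref{le3}: since $X$ is Calabi--Yau and $(\text{ad}(E_G)\, ,\text{ad}(\theta))$ is a polystable Higgs vector bundle, the underlying vector bundle $\text{ad}(E_G)$ is polystable. Finally, the polystability of the vector bundle $\text{ad}(E_G)$ is equivalent to the polystability of the principal $G$--bundle $E_G$ by \cite[p. 224, Corollary 3.8]{AB}, which gives the claim.

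I do not expect a serious obstacle once Lemma \ref{le3} is in hand: every ingredient here---existence of the Yang--Mills connection, the Yang--Mills characterization of polystability for Higgs bundles, and the adjoint-bundle criterion of \cite{AB}---is formal and independent of the geometry of $X$. The only place where the Calabi--Yau hypothesis $c_1(TX)\,=\,0$ enters is through Lemma \ref{le3}, where it is used precisely to guarantee that $\Omega^1_X$ is polystable of slope zero, so that $TX\otimes V$ remains polystable of the correct slope. Thus the genuinely substantive part of the argument has effectively been discharged already in Lemma \ref{le3}, and what remains is only to assemble the standard reduction via the adjoint bundle.
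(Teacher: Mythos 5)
Your proposal matches the paper's argument: the paper simply states that the proof is identical to that of Lemma \ref{lem4}, which is precisely the adjoint-bundle reduction you describe, with Lemma \ref{lem2} replaced by its Calabi--Yau counterpart Lemma \ref{le3}. No issues.
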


\begin{proof}
The proof is identical to the proof of Lemma \ref{lem4}.
\end{proof}


\end{document}